\newtheorem {lemma}{Lemma}[section]
\newtheorem {theorem} {Theorem}[section]
\newtheorem {claim}{Claim}[section]
\newtheorem {proposition}{Proposition}[section]
\newtheorem {question}{Question}[section]
\begin{document}

\title{Spectral conditions for the existence of chorded cycles in graphs with fixed size}

\author{Jin Cai\footnote{Email: jincai@m.scnu.edu.cn}, Leyou Xu\footnote{Email: leyouxu@m.scnu.edu.cn}, Bo Zhou\footnote{Email: zhoubo@m.scnu.edu.cn}\\
School of Mathematical Sciences, South China Normal University\\
Guangzhou 510631, P.R. China}

\date{}
\maketitle

\begin{abstract}
A chorded cycle is a cycle with at least one chord.
Gould  asked in [Graphs Comb. 38 (2022) 189] the question: What spectral conditions imply a graph contains a chorded cycle?
For a graph with fixed size, extremal spectral conditions are given
to ensure that a graph contains a chorded cycle and  a $(2k-3)$-chorded $(2k+1)$-cycle for $k\ge 2$, respectively, via spectral radius.
 \\ \\
{\it Keywords:} chorded cycle, doubly chorded cycle, spectral radius
\end{abstract}

\section{Introduction}

We only consider finite simple graphs.
The problem of the existence of cycles  satisfying certain  conditions in a graph has been studied extensively, and a number of well-known results which guarantee the existence of such cycles  have been obtained \cite{BFG,BLS,Gou}.

An edge that joins two vertices
of a cycle $C$ is a chord of $C$ if the edge is not itself an edge of $C$.
A cycle with at least $k$ chords is called a $k$-chorded cycle.
A $1$-chorded cycle is known as a chorded cycle and a $2$-chorded cycle is known as a doubly chorded cycle.
To answer P\'{o}sa's
question  \cite{Po} of finding degree conditions that imply the existence of a chorded cycle,
Czipzer \cite{Cz} proved
that any graph with minimum degree at least three contains a chorded cycle  (see Lov\'{a}sz  \cite{Lov}, problem 10.2 and its solution). It may be shown that,
in fact, such a condition implies the existence of a  $2$-chorded cycle.
In this sense,
it is natural to find conditions that imply the
existence of $k$-chorded cycles with $k\ge 2$. For example, minimum degree conditions that implies the
existence vertex disjoint $2$-chorded cycles were given by Qiao and Zhang \cite{QZ}, Cream et al. \cite{CFG} and
 Santana and Van Bonn \cite{SV}, and a degree sum condition was established by Gould et al. \cite{GHH}.
There are also conditions that imply the existence of $2$-chorded cycles in  bipartite graphs  \cite{W97,GLW}.  For more results on  chorded cycles, one may refer to the recent survey \cite{Gou} (and references there), in which
Gould asked the question: What spectral conditions imply a graph contains a chorded cycle? Answers are given for graphs with fixed order via spectral radius \cite{ZHW} and signless Laplacian radius \cite{XZ}, respectively.
Motivated by the above work,
we investigate the following question:

\begin{question} \label{Qu}
What conditions via spectral radius imply a graph with fixed size contains a chorded cycle ($s$-chorded $k$-cycle with $k\ge 4$ and $1\le s<{k\choose 2}-k$, respectively)?
\end{question}

For a graph $G$, we denote by $\rho(G)$ its spectral radius, which is defined to be the largest eigenvalue of the adjacency matrix of $G$. Let $G\cup H$ be the disjoint union of graphs $G$ and $H$. The disjoint union of $k$ copies of a graph $G$ is denoted by $kG$.
The join of disjoint graphs $G$ and $H$, denoted by $G\vee H$, is the graph obtained from $G\cup H$ by adding all possible edges between vertices in $G$ and vertices in $H$. Denote by $K_n$ the $n$-vertex complete graph, and $P_n$ the $n$-vertex path.
Denote by $K_{n_1,\dots, n_k}$ the complete $k$-partite graph with classes containing $n_1,\dots, n_k$ vertices.
For simplicity, we consider isolate-free graphs, i.e., graphs without isolated vertices.



The main results are listed as below.

\begin{theorem}\label{chord}
Suppose that $G$ is an isolate-free graph with size $m$, where $m\ge 4$.

(i) If $m\le 8$ and $\rho(G)\ge \theta(m)$, then $G$ contains a chorded cycle unless $G\cong G_m$, where
 $G_m=K_1\vee tK_2$ if $m=6$,
and $G_m=K_1\vee (tK_2\cup (m-3t)K_1)$ if $m=4,5,7,8$ with $t=\lfloor\frac{m}{3}\rfloor$, and
 $\theta(m)$ is the largest root of
$x^3-x^2+(t-m)x+m-3t=0$.

(ii)  If $m\ge 9$ and $\rho(G)\ge \sqrt{m}$, then $G$ contains a chorded cycle unless
$G\cong K_1\vee 3K_2$, $K_1\vee (2K_2\cup 3K_1)$, $K_1\vee (K_2\cup 6K_1)$ or $K_{1,9}$ if $m=9$, and
$G\cong K_{1,m}$ or $K_{2,\frac{m}{2}}$ with even $m$ if $m\ge 10$.
\end{theorem}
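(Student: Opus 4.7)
The plan is to combine the structural characterization of chorded-cycle-free graphs with a Perron-eigenvector estimate and then run case analysis on each surviving structural class. The structural fact I would use is that $G$ is chorded-cycle-free iff every block of $G$ is a bridge or a chordless cycle; equivalently $|N(u)\cap N(v)|\le 1$ whenever $uv\in E(G)$, so every edge lies in at most one triangle. This is the only place the hypothesis enters.

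Fix a non-negative Perron eigenvector $x$ and let $u$ satisfy $x_u=\max_v x_v$. Writing $\rho(G)^2 x_u=(A^2x)_u$ and expanding the sum by whether the length-two walk passes through $N(u)$ or not gives
\[
\rho(G)^2 x_u\;=\;d(u)\,x_u+\!\!\sum_{vw\in E(N(u))}\!\!(x_v+x_w)+\sum_{v\in N(u)}\sum_{w\in N(v)\setminus N[u]}x_w.
\]
Applying $x_v\le x_u$ and using the identity $\sum_{v\in N(u)}\bigl(d(v)-|N(u)\cap N(v)|-1\bigr)=e(N(u),V\setminus N[u])$ (where the bound $|N(u)\cap N(v)|\le 1$ is the chorded-cycle-free hypothesis) yields the master inequality
\[
\rho(G)^2\;\le\;m+e(N(u))-e(V\setminus N[u]).
\]

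To prove (ii) I assume $\rho(G)\ge\sqrt{m}$, hence $e(N(u))\ge e(V\setminus N[u])$. If $e(N(u))=e(V\setminus N[u])=0$, then $G$ is bipartite between $N(u)$ and $\{u\}\cup(V\setminus N[u])$; the classical bound $\rho(G)\le\sqrt{m}$ for bipartite graphs (equality only for $K_{s,t}$), together with the chorded-cycle-free condition forcing the small side to have size at most two, yields $G=K_{1,m}$ or $K_{2,m/2}$ (even $m$). If $e(V\setminus N[u])=0$ but $e(N(u))=t\ge 1$, a swapping argument reduces to $V\setminus N[u]=\emptyset$, so $G=K_1\vee(tK_2\cup(m-3t)K_1)$; its Perron root satisfies $x^3-x^2+(t-m)x+(m-3t)=0$, and plugging $x=\sqrt{m}$ reduces this to $t(\sqrt{m}-3)=0$, forcing $m=9$ and yielding the three friendship-type exceptions. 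The remaining subcase $e(V\setminus N[u])\ge 1$ is ruled out by a refined Perron-entry comparison at both endpoints of such an edge, which produces $\rho<\sqrt{m}$ whenever $m\ge 9$.

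For (i) the same inequality drives the argument, but now $\theta(m)>\sqrt{m}$ admits slack; a local switching argument again reduces the extremal to $G=K_1\vee H$ with $H$ a disjoint union of edges and isolated vertices, and maximizing $\rho$ with $m$ fixed forces $t=\lfloor m/3\rfloor$, i.e.\ $G=G_m$ with $\rho=\theta(m)$. The main obstacle is that the master inequality is loose at the friendship-type extremals (for instance $\rho(K_1\vee 3K_2)^2=9$ while the bound only gives $12$), so the equality analysis alone does not isolate the extremals; the technical heart is therefore the local-switching reduction to $V\setminus N[u]=\emptyset$ and the refined Perron analysis that rules out $e(V\setminus N[u])\ge 1$, both of which require care beyond the straightforward $x_v\le x_u$ estimate.
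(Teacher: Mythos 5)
Your skeleton (Perron vector at a maximum-entry vertex $u$, the second-order eigen-equation, and the inequality $\rho^2\le m+e(N(u))-e(V\setminus N[u])$) is close in spirit to what the paper does, but the foundation you build it on is false. It is not true that $G$ is chorded-cycle-free iff every block is a bridge or a chordless cycle: $K_{2,3}$ is $2$-connected, is not a cycle, and contains no chorded cycle --- and indeed $K_{2,\frac{m}{2}}$ is one of the extremal graphs of the theorem you are proving, so your characterization excludes one of your own answers. The ``equivalent'' form is also not an equivalence: $|N(v)\cap N(w)|\le 1$ for every edge $vw$ is necessary (two common neighbours of an edge give a chorded $4$-cycle) but not sufficient (a $5$-cycle with one chord satisfies it yet is a chorded cycle). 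The correct local fact, which is what the paper actually extracts and which you never state, is that $G[N(u)]$ is $P_3$-free (a path $vwz$ in $N(u)$ gives the $4$-cycle $uvwzu$... more precisely $uvz$ would need an edge; the right cycle is $u v w z u$ wait --- the paper's version: a $P_3$ in $G[N(u)]$ with middle vertex $w$ gives the cycle $u v w z u$? No: if $vw,wz\in E(G[N(u)])$ then $uvwzu$ is a $4$-cycle with chord $uw$), hence $G[N(u)]$ is a disjoint union of edges and isolated vertices. You can recover this from the one correct direction of your ``equivalence,'' but as written your ``only place the hypothesis enters'' is wrong: the no-chorded-cycle hypothesis is used again and again later, e.g.\ to certify that each switched graph still has no chorded cycle.

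The second, larger gap is that everything you label the ``technical heart'' is asserted rather than proved, and it is the bulk of the actual argument. Concretely: (a) you never set up the extremal framework (maximize $\rho$ over the class, reduce to connected $G$), without which ``a swapping argument'' has nothing to push against; (b) the claim that $e(N(u))\ge 1$ forces $V\setminus N[u]=\emptyset$ is exactly the paper's Claims 3.1, 3.2 and 3.4, which require showing $e(X_1,Y)=0$ via cut-edge/cut-vertex switchings and then checking roughly a dozen explicit small graphs ($H_1,H_2,H_3,F_1,F_2,F_3$ and the joins $K_1\vee(K_2\cup jK_1)$) by direct computation of their spectral radii for $4\le m\le 9$; and (c) for $m\ge 10$ the paper does \emph{not} reduce to $V\setminus N[u]=\emptyset$ --- it shows $N(u)$ must be independent by a genuinely non-obvious surgery: delete an edge $v_1v_2$ of $G[N(u)]$, attach a new pendant vertex at $u$, and test with the vector that puts weight $\frac{\sqrt{6}}{3}x_{v_1}$ on the three affected vertices, using $x_{v_1}=\frac{x_u}{\rho-1}$ to get $\rho(G^*)>\rho(G)$. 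Your alternative of a ``refined Perron-entry comparison producing $\rho<\sqrt m$'' whenever $e(V\setminus N[u])\ge 1$ is not justified and is not obviously available as a direct bound. The cubic computation $p(\sqrt m)=t(\sqrt m-3)$ is correct and does isolate $m=9$, but by itself it only handles the case after the structure has been pinned down. As it stands the proposal is an outline with a false key lemma and the decisive steps missing.
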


Yu, Li and Peng \cite{YLP} showed that  for an isolate-free  graph $G$ with $m\ge 8$ edges, if $\rho(G)\ge \frac{1}{2}+\sqrt{m-\frac{3}{4}}$, then $G$ contains a copy of $K_1\vee P_4$.
Zhang and Wang \cite{ZW}) showed  same result for $m\ge 11$.
Note that there are two $2$-chorded $5$-cycles, one is $K_1\vee P_4$, the other is $SK_4$, obtained from $K_4$ by subdividing an edge. So a spectral sufficient condition for the existence of $2$-chorded cycles is known.
Recently, Li, Zhai and Shu \cite{LZS} showed the following result:
For integers $k$ and $m$ with $k\ge 3$ and $m\ge 4(k^2+3k+1)^2$, suppose that $G$ is an isolate-free graph with size $m$. If $\rho(G)\ge \frac{k-1+\sqrt{4m-k^2+1}}{2}$, then $G$ contains a  chorded $(2k+1)$-cycle or a chorded $(2k+2)$-cycle
unless $G\cong K_k\vee\left(\frac{m}{k}-\frac{k-1}{2}\right)K_1$, where the chord connects two vertices of distance two of the cycle.

\begin{theorem}\label{Cai2}
For integers  $k\ge 2$ and  $m\ge \frac{(7k^2+6k+2)^2}{16}+\frac{k^2-1}{4}$, suppose that $G$ is an isolate-free graph with size $m$. If $\rho(G)\ge \frac{k-1+\sqrt{4m-k^2+1}}{2}$, then $G$ contains a $(2k-3)$-chorded $(2k+1)$-cycle
unless $G\cong K_k\vee\left(\frac{m}{k}-\frac{k-1}{2}\right)K_1$.
\end{theorem}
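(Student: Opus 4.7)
The plan is to argue by contradiction. Assume $G$ is isolate-free with $m$ edges, $\rho(G) \ge \frac{k-1+\sqrt{4m-k^2+1}}{2}$, $G$ contains no $(2k-3)$-chorded $(2k+1)$-cycle, and yet $G \not\cong K_k \vee n_1 K_1$ with $n_1 = \frac{m}{k} - \frac{k-1}{2}$. Squaring the spectral hypothesis yields $\rho^2 - (k-1)\rho \ge m - \binom{k}{2}$, equivalently $\bigl(\rho - \frac{k-1}{2}\bigr)^2 \ge m - \frac{k^2-1}{4}$, and the lower bound on $m$ then forces $\rho \ge \frac{k-1}{2} + \frac{7k^2+6k+2}{4}$. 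Fix a positive Perron eigenvector $x$ with $x_u = \max_v x_v = 1$; then $\rho = \sum_{v \sim u} x_v \le d(u)$, so $u$ has degree of order at least $k^2$.

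The first step converts this spectral inequality into edge-density information near $u$. From the walk identity $\rho^2 x_u = \sum_{v\in N(u)}\sum_{w\in N(v)} x_w$ and $x_w \le 1$,
\[
\rho^2 \;\le\; d(u) + 2\,e(G[N(u)]) + e(N(u), V(G)\setminus N[u]),
\]
which, combined with the edge identity $m = d(u) + e(G[N(u)]) + e(N(u),V\setminus N[u]) + e(G[V\setminus N[u]])$, rearranges to
\[
e(G[N(u)]) \;\ge\; (k-1)\rho - \binom{k}{2} + e(G[V\setminus N[u]]).
\]
A careful analysis of near-tightness in the estimates $x_w\le 1$ then isolates a set $S=\{u_0,u_1,\ldots,u_{k-1}\}$, with $u_0 = u$, of $k$ vertices whose Perron weights are all close to $1$, and shows $G[S]\cong K_k$. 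This step mimics the two-level Perron profile of $K_k \vee n_1 K_1$, where the $K_k$-vertices share a common eigenvector entry $\alpha$ and the $K_1$-vertices share a strictly smaller entry $\beta \approx k/\rho$.

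The heart of the argument, and the main obstacle, is to rule out any edge inside $V(G)\setminus S$. Suppose for contradiction $vw\in E(G)$ with $v,w\notin S$. I would construct the forbidden subgraph as the $(2k+1)$-cycle
\[
u_0\, y_0\, u_1\, y_1\, u_2\, y_2\, \cdots\, u_{k-1}\, w\, v\, u_0,
\]
where each $y_i$ with $0\le i\le k-2$ is chosen as a common neighbour of $u_i$ and $u_{i+1}$, distinct from $S\cup\{v,w,y_0,\ldots,y_{i-1}\}$. Since no edge $u_iu_j$ of $G[S]$ appears on the cycle, every one of the $\binom{k}{2}\ge 2k-3$ such edges becomes a chord, producing the forbidden object. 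The requirement $w\in N(u_{k-1})$ and $v\in N(u_0)$ is arranged by relabelling $S$, which is permissible because $G[S]\cong K_k$, after verifying via the Perron equation that each of $v,w$ must be adjacent to at least two vertices of $S$. The distinctness of the $y_i$'s requires $|N(u_i)\cap N(u_{i+1})|\ge k+i+3$ for each $i$, which follows from the Perron equations $\rho x_{u_i}=\sum_{y\sim u_i} x_y$ together with the bounds on individual entries derived in the previous paragraph. The quantitative hypothesis $m \ge \frac{(7k^2+6k+2)^2}{16} + \frac{k^2-1}{4}$ is calibrated precisely so that these codegree bounds survive all $k-1$ successive choices.

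Once $V(G)\setminus S$ is shown to be independent, every edge of $G$ outside $G[S] = K_k$ lies between $S$ and $V\setminus S$, so $e(S, V\setminus S) = m - \binom{k}{2}$. The spectral extremality then forces $|V\setminus S| = n_1$ and every slot to be occupied, giving $G\cong K_k \vee n_1 K_1$ and contradicting our assumption. The most delicate technical step in the plan is the identification of the exact $K_k$ on $S$ from the Perron inequality alone, because the edge-density of $G[N(u)]$ is far too small for a classical Tur\'an-type argument, and the identification must instead be driven by weighted comparisons of eigenvector entries.
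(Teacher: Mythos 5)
Your overall skeleton (square the hypothesis, pass to the second walk equation at the maximum-weight vertex $u$, extract structure in $N(u)$, then kill edges elsewhere) points in the right direction, but the two steps that carry all the difficulty are asserted rather than proved, and one of them rests on a claim that is not true in general. First, the identification of the clique: you say a ``careful analysis of near-tightness in $x_w\le 1$'' isolates $k$ vertices of weight close to $x_u$ inducing $K_k$, but you give no mechanism, and the inequality $e(G[N(u)])\ge (k-1)\rho-\binom{k}{2}+e(G[V\setminus N[u]])$ by itself cannot do this. The indispensable combinatorial input you never invoke is that $G[N(u)]$ contains no path on $2k$ vertices: such a path together with $u$ closes into a $(2k+1)$-cycle whose $2k-2\ge 2k-3$ chords are the edges from $u$ to the interior of the path. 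It is this $P_{2k}$-freeness, fed into the Balister--Gy\H{o}ri--Lehel--Schelp extremal theorem for long-path-free connected graphs (after passing to $(k-1)$-cores and ruling out $C_{2k-1}$), that forces $G[N(u)]\cong K_{k-1}\vee(\delta_G(u)-k+1)K_1$ in the paper; without some forbidden-subgraph constraint on $N(u)$, the weighted edge count is compatible with many structures containing no $K_{k-1}$ at all, and the argument cannot close.

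Second, the independence of $V(G)\setminus S$. Your cycle $u_0y_0u_1\cdots u_{k-1}wvu_0$ is a correct template (its $\binom{k}{2}\ge 2k-3$ chords from $G[S]$ do yield the forbidden configuration), but it only applies when $v$ and $w$ are suitably attached to $S$. The supporting claim that each of $v,w$ must be adjacent to at least two vertices of $S$ is unsupported and false in general: a vertex of $Y=V(G)\setminus N[u]$ with small Perron weight need not meet $S$, so an edge $vw$ deep inside $Y$ escapes your construction entirely. Likewise the codegree bounds $|N(u_i)\cap N(u_{i+1})|\ge k+i+3$ do not follow from the degree bounds you cite, since $|V(G)|$ may be of order $m$ while the relevant degrees are only of order $\rho\approx\sqrt m$, so inclusion--exclusion gives nothing. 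The paper sidesteps both problems by never hunting for a forbidden subgraph among stray edges: it bounds $e(Y)$ globally via $e(Y)\le\eta(X)+\sum_{z\in Y}\delta_X(z)\bigl(\frac{x_z}{x_u}-1\bigr)+\frac{k(k-1)}{2}$ and then shows the equality case forces $e(Y)=0$ and finally $Y=\emptyset$. As written, your proposal has genuine gaps at both of its load-bearing steps.
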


Equivalently, Theorems \ref{chord} and \ref{Cai2} may be restated as the following forms.

\begin{theorem}\label{chord1}
Suppose that $G$ is  an isolate-free graph with size $m$, where $m\ge 4$. If  $G$ does not contain a chorded cycle, then
\[
\rho(G)\le \begin{cases}
\theta(m) & \mbox{if $m\le 8$}\\
\sqrt{m} & \mbox{if $m\ge 9$}
\end{cases}
\]
with equality if and only if
\begin{equation} \label{xz20}
G\cong \begin{cases}
G_m & \mbox{if $m\le 8$},\\
K_1\vee 3K_2, K_1\vee (2K_2\cup 3K_1), K_1\vee (K_2\cup 6K_1), K_{1,9} & \mbox{if $m=9$},\\
K_{1,m}, K_{2, \frac{m}{2}} & \mbox{if $m\ge 10$}.
\end{cases}
\end{equation}
\end{theorem}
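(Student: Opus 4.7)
The plan is to separate the cases by bipartiteness and then handle small $m$ by a direct spectral analysis. First, consider the bipartite regime. If $G$ is bipartite with bipartition $(X,Y)$ and biadjacency matrix $B$, then $\rho(G)=\sigma_1(B)$ while $\|B\|_F^2=m$, so $\rho(G)\le\sqrt{m}$ with equality iff $B$ has rank one. Combined with isolate-freeness, equality forces $G\cong K_{a,b}$ with $ab=m$; the chord-cycle-free hypothesis then reduces the possibilities to $\min(a,b)\le 2$, because in $K_{3,3}$ the $6$-cycle $a_1b_1a_2b_2a_3b_3a_1$ carries the chord $a_1b_2$. The bipartite extremals are therefore exactly $K_{1,m}$ and, when $m$ is even, $K_{2,m/2}$.

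For the non-bipartite regime with $m\ge 9$ and $\rho(G)\ge\sqrt{m}$, I would work with the Perron eigenvector $\mathbf{x}$, normalised so that $x_{u^*}=\max_v x_v=1$, and exploit the identity
\[
\rho(G)^2 \;=\; d(u^*)\;+\;\sum_{w\ne u^*}\bigl|N(u^*)\cap N(w)\bigr|\,x_w.
\]
The no-chorded-cycle hypothesis forces strong local structure at $u^*$: (i) a path $v_1v_2v_3$ inside $N(u^*)$ would give a chorded $4$-cycle $u^*v_1v_2v_3u^*$ with chord $u^*v_2$, so $N(u^*)$ induces a matching; (ii) if $v_1v_2\subseteq N(u^*)$ is a matching edge and some $x\in N(v_1)\cap N(v_2)$ differs from $u^*$, then $u^*v_1xv_2u^*$ has chord $v_1v_2$, whence $N(v_1)\cap N(v_2)=\{u^*\}$; and (iii) if $w\notin N(u^*)\cup\{u^*\}$ has adjacent-in-$G$ neighbours $v_1,v_2\in N(u^*)$, then the $4$-cycle $u^*v_1wv_2u^*$ again has chord $v_1v_2$, so $N(u^*)\cap N(w)$ is independent in $G$. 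Substituting these constraints into the identity, and using the per-vertex eigenvector equations $\rho x_w=\sum_{w'\sim w}x_{w'}$ (which for matched leaves give $x_w=1/(\rho-1)$ and for pendants give $x_w=1/\rho$), I expect to derive $\rho(G)^2\le m$; the equality analysis then isolates the three book-and-pendant graphs at $m=9$ and rules out any non-bipartite extremal for $m\ge 10$.

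Finally, for $m\le 8$ the bound takes the form $\theta(m)$ rather than $\sqrt{m}$; the candidate $G_m=K_1\vee(tK_2\cup(m-3t)K_1)$ (respectively $K_1\vee tK_2$ when $m=6$) admits the equitable partition $(\{u\},V(tK_2),V((m-3t)K_1))$ with quotient matrix
\[
\begin{pmatrix}0&2t&m-3t\\1&1&0\\1&0&0\end{pmatrix},
\]
whose characteristic polynomial is exactly $x^3-x^2+(t-m)x+m-3t$, giving $\rho(G_m)=\theta(m)$. A finite enumeration over isolate-free chord-cycle-free graphs with $4\le m\le 8$ edges then completes this regime. I expect the main obstacle to be the non-bipartite analysis at $m=9$, where $\theta(9)=\sqrt{9}=3$ and three distinct non-bipartite graphs simultaneously attain $\rho=\sqrt{m}$; ruling out near-extremal competitors requires a coordinate-by-coordinate eigenvector comparison across the orbits of $u^*$ rather than the blunt uniform bound $x_w\le 1$, and recovering the book-and-pendant shape demands tracking the matching edges inside $N(u^*)$ and the pendants outside explicitly.
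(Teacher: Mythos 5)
Your bipartite analysis is sound and in fact cleaner than the paper's: the singular-value bound $\rho(G)=\sigma_1(B)\le\|B\|_F=\sqrt{m}$ with rank-one equality, plus the observation that $K_{3,3}$ already carries a chorded $6$-cycle, correctly isolates $K_{1,m}$ and $K_{2,m/2}$. Your quotient-matrix computation for $G_m$ is also correct. But the heart of the theorem is the non-bipartite case, and there your proposal has a genuine gap. The walk identity you write down, after imposing all three of your local constraints (i)--(iii), yields only
\[
\rho^2 x_u=\delta_G(u)x_u+\sum_{v\in X_1}x_v+\sum_{w\in Y}\delta_X(w)x_w\le\bigl(\delta_G(u)+2e(X)+e(X,Y)\bigr)x_u,
\]
where $X=N(u)$ and $X_1$ is the set of matched vertices of $G[X]$; since $m=\delta_G(u)+e(X)+e(X,Y)+e(Y)$, this gives only $\rho^2\le m+e(X)-e(Y)$, which fails to prove $\rho^2\le m$ precisely when $G[N(u)]$ contains edges --- and it must fail, since $K_1\vee 3K_2$ attains $\rho^2=m=9$ with $e(X)=3$. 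You acknowledge that the ``blunt uniform bound $x_w\le 1$'' is insufficient, but ``a coordinate-by-coordinate eigenvector comparison'' is a hope, not an argument: you never produce the mechanism that excludes non-bipartite competitors for $m\ge10$.

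The paper's resolution is a local perturbation you do not have: assuming $v_1v_2$ is an edge of $G[N(u)]$, first show (their Claim 3.2, itself a nontrivial case analysis on cut edges and cut vertices) that $v_1,v_2$ have degree $2$, so $x_{v_1}=x_{v_2}=x_u/(\rho-1)$; then delete $v_1v_2$, attach a new pendant $v_3$ at $u$, and test the new graph against the vector that redistributes weight $\tfrac{\sqrt6}{3}x_{v_1}$ onto $v_1,v_2,v_3$. The Rayleigh quotient increases by $2\bigl((\sqrt6-2)x_u-x_{v_1}\bigr)x_{v_1}>0$ exactly when $m\ge10$, contradicting extremality. This sign condition is also why $m=9$ is genuinely exceptional and admits three non-bipartite extremal graphs. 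Finally, your ``finite enumeration'' for $4\le m\le8$ and the $m=9$ equality analysis are feasible in principle but are not carried out; the paper replaces them with structural claims (pendants attach to $u$, $e(X_1,Y)=0$, and a short case split on $|X_1|\in\{0,2,4,6\}$ with a handful of explicit exceptional graphs). As written, the proposal proves the bipartite half of the statement and the value of $\rho(G_m)$, but not the theorem.
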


\begin{theorem}\label{Cai2+}
For integers  $k\ge 2$ and  $m\ge \frac{(7k^2+6k+2)^2}{16}+\frac{k^2-1}{4}$, if  $G$ is an isolate-free graph with size $m$. If $G$ does not contain a $(2k-3)$-chorded $(2k+1)$-cycle,
then
$\rho(G)\le \frac{k-1+\sqrt{4m-k^2+1}}{2}$ with equality if and only if $G\cong K_k\vee\left(\frac{m}{k}-\frac{k-1}{2}\right)K_1$.
\end{theorem}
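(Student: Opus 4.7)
The plan is to set $\rho_0:=\frac{k-1+\sqrt{4m-k^2+1}}{2}$ and observe that $\rho_0$ is the positive root of $x^2-(k-1)x-(m-\binom{k}{2})=0$, which is exactly the spectral radius of the extremal graph $K_k\vee tK_1$ with $t=\frac{m}{k}-\frac{k-1}{2}$ (directly from its two-level eigenvalue equations), so the hypothesis $\rho(G)\ge\rho_0$ is equivalent to $\rho(G)^2-(k-1)\rho(G)\ge m-\binom{k}{2}$. Assuming $G$ has no $(2k-3)$-chorded $(2k+1)$-cycle, the goal is to reverse this inequality unless $G\cong K_k\vee tK_1$.

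Let $\rho=\rho(G)$ and $x$ be the Perron eigenvector of $G$, normalized so that $x_{u^*}=\max_v x_v=1$, and put $W=N(u^*)$, $R=V(G)\setminus(W\cup\{u^*\})$. From $\rho=\rho x_{u^*}=\sum_{v\in W}x_v\le d(u^*)$ we get $d(u^*)\ge\rho_0=\Theta(\sqrt m)$; the size hypothesis $m\ge\frac{(7k^2+6k+2)^2}{16}+\frac{k^2-1}{4}$ therefore makes $d(u^*)\gg k^2$. Expanding $\rho^2x_{u^*}=(A^2x)_{u^*}=d(u^*)+\sum_{v\ne u^*}|N(u^*)\cap N(v)|x_v$ and subtracting $(k-1)\rho x_{u^*}=(k-1)\sum_{v\in W}x_v$ gives the working identity
\[
\rho^2-(k-1)\rho=d(u^*)+\sum_{v\in W}\bigl(|N(u^*)\cap N(v)|-(k-1)\bigr)x_v+\sum_{v\in R}|N(u^*)\cap N(v)|\,x_v.
\]

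The central claim is a structural lemma: because $G$ has no $(2k-3)$-chorded $(2k+1)$-cycle, the set $S:=\{v\in W:|N(u^*)\cap N(v)|\ge k\}$ has $|S|\le k-1$ and, in the case of equality, $S\cup\{u^*\}$ spans a $K_k$. I would prove this by contradiction: if $|S|\ge k$ or $S\cup\{u^*\}$ fails to span a clique, then exploiting the large-$m$ hypothesis one can extract $k+1$ vertices forming a $K_{k+1}$ together with $k$ \emph{distinct} additional vertices each adjacent to every member of the $K_{k+1}$, giving a copy of $K_{k+1}\vee kK_1=K_{2k+1}-K_k$. A direct calculation shows that the Hamilton $(2k+1)$-cycle $w_1v_1w_2v_2\cdots w_kv_kw_{k+1}w_1$ in this graph has $\binom{k+1}{2}-1+k(k-1)=\frac{3k^2-k-2}{2}$ chords, which exceeds $2k-3$ for every $k\ge 2$, producing the forbidden chorded cycle and a contradiction.

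Granting this lemma, I would bound each term in the working identity: contributions from $v\in W\setminus S$ are non-positive (since $|N(u^*)\cap N(v)|\le k-1$), contributions from $v\in R$ are at most $(k-1)x_v$ (again by bounding codegree), and the at most $k-1$ terms from $v\in S$ are bounded via $x_v\le 1$, together with the edge-count identity $m=d(u^*)+e(W)+e(W,R)+e(R)$ and the isolate-free inequality $e(W,R)+2e(R)\ge|R|$. A careful accounting yields $\rho^2-(k-1)\rho\le m-\binom{k}{2}$, forcing equality throughout; propagating the equality through $x_v\le 1$ then pins $x$ down to two distinct values and forces $V\setminus(S\cup\{u^*\})$ to be an independent set completely joined to $S\cup\{u^*\}$, giving $G\cong K_k\vee tK_1$. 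The hard part is the structural lemma: converting the codegree data into an explicit $(2k-3)$-chorded $(2k+1)$-cycle, since both the cycle and its $2k-3$ chords must be assembled simultaneously, and the polynomial-in-$k$ lower bound on $m$ is invoked precisely to arrange enough distinct common neighbors during the extraction.
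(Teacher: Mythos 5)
Your overall frame (reducing to $\rho^2-(k-1)\rho\le m-\binom{k}{2}$ via the identity for $(A^2\mathbf{x})_{u^*}$ at the maximum-entry vertex) is the same as the paper's, but the structural lemma on which everything rests is false, and the mechanism you propose for proving it cannot be repaired. For $k=2$ your lemma asserts that in a graph with no chorded $5$-cycle at most one neighbour of $u^*$ has two or more common neighbours with $u^*$. Consider $K_1\vee nP_3$: a vertex $u^*$ joined to $n$ disjoint paths $a_ib_ic_i$. This graph contains no $5$-cycle at all (a cycle avoiding $u^*$ would lie in the forest $nP_3$, and a $C_5$ through $u^*$ would need a path on $4$ vertices inside $nP_3$), yet every middle vertex $b_i$ has $|N(u^*)\cap N(b_i)|=|\{a_i,c_i\}|=2$, so $|S|=n$ while $m=5n$ is as large as you like and $d(u^*)=3n\gg k^2$. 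The same example defeats the proposed extraction: many neighbours with codegree $\ge k$ do not yield a $K_{k+1}$ with $k$ common neighbours ($K_1\vee nP_3$ is even $K_4$-free), so the forbidden configuration $K_{k+1}\vee kK_1$ never materializes. Your auxiliary bound $|N(u^*)\cap N(v)|\le k-1$ for $v\in R$ fails too (take $K_{2,n}$ with $k=2$). These graphs do not contradict the theorem only because their spectral radius falls below $\frac{k-1+\sqrt{4m-k^2+1}}{2}$; the point is that your intermediate claims are not consequences of the forbidden subgraph plus large $m$, so the spectral hypothesis must enter the structural analysis itself, which your outline does not arrange.

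The paper's route supplies exactly the translation you are missing: since $u^*$ together with a path on $2k$ vertices inside $N(u^*)$ produces a $(2k+1)$-cycle carrying $2k-2\ge 2k-3$ chords through $u^*$, the induced graph $G[N(u^*)]$ is $P_{2k}$-free. The proof then controls, for each component $H$ of $G[N(u^*)]$, the weighted quantity $\eta(H)=\sum_{z}(\delta_H(z)-k+1)\frac{x_z}{x_{u^*}}-|E(H)|$, passing to the $(k-1)$-core and invoking the Balister--Gy\H{o}ri--Lehel--Schelp edge bound for connected graphs without long paths (together with Erd\H{o}s--Gallai and Bondy--Ore bounds for the borderline cases). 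The spectral hypothesis forces $\eta(X)\ge-\frac{k(k-1)}{2}$, which combined with these extremal bounds pins $G[N(u^*)]$ down to $K_{k-1}\vee(\delta_G(u^*)-k+1)K_1$ and then forces $Y=\emptyset$. If you wish to salvage your write-up, replace the codegree lemma with this $P_{2k}$-free observation; the codegree set $S$ is simply not controlled by the hypotheses you cite.
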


We mention that the results echo a series of results for graphs with fixed size in the literature, some of which are listed below.

Let $G$ be an isolate-free  graph with $m\ge 1$ edges.

\begin{enumerate}

\item[(i)] If $\rho(G)\ge \sqrt{m}$, then $G$ contains a triangle unless $G$ is a complete bipartite graph \cite{No,Nik} (see Theorems 14 and 15 in \cite{Nik}).

\item[(ii)] If $m\ge 9$ and $\rho(G)\ge \sqrt{m}$,  then $G$ contains a $4$-cycle unless $G\cong K_{1,m}, K_{1,8}^+$  \cite{Nik1}.
If $m\ge 22$ and $\rho(G)\ge \sqrt{m-1}$, then $G$ contains a $4$-cycle unless $G$ is one of the following graphs: $K_{1,m}$, $K_{1,m-1}^+$,
$K_{1,m-1}$ with a pendant edge attached to a pendant vertex, $K_{1,m-1}$ together with a $K_2$ \cite{Wan}. Here $K_{1,m-1}^+$ denote the graph obtained from $K_{1,m-1}$ by adding an edge for form a triangle.

\item[(iii)] If $m\ge 3$ and  $\rho(G) \ge
 \frac{1}{2}+\sqrt{m-\frac{3}{4}}$, then $G$ contains a $K_4$-minor unless $G\cong K_{1,1,\frac{m-1}{2}}$ \cite{Z}.

\item[(iv)]  Suppose that $\rho(G)\ge \frac{1}{2}+\sqrt{m-\frac{3}{4}}$.  If $m\ge 8$ ($m\ge 22$, respectively), then $G$ contains a $5$-cycle ($6$-cycle, respectively)
      unless $m$ is odd and $G\cong K_{1,1,\frac{m-1}{2}}$ \cite{ZLS}.

\item[(v)] Suppose that $m$ is even and $\rho(G)\ge \gamma(m)$.
If $m\ge 14$ ($m\ge 74$, respectively), then $G$ contains a $5$-cycle ($6$-cycle, respectively)
      unless $G\cong K_{1,1,\frac{m}{2}}^{-}$ \cite{MLH}.

\end{enumerate}

Note the last two items have been strengthened to contain a chorded $5$-cycle
($6$-cycle
, respectively),
see \cite{LW} and references there. For more results
in this direction, see, e.g. \cite{CDT1,CDT2,LLL}.
On the other hand, the problem of determining the graphs in
a class of (connected) graphs of fixed size that maximize the
spectral radius received much attention, see, e.g. \cite{BH,Row1,LW,LLL,MLH,ZLS}. In this sense, the class of graphs in this article is the graphs  of fixed size that do not contain a chorded cycle or a $(2k-3)$-chorded $(2k+1)$-cycle for $k\ge 2$.

\section{Preliminaries}

Let $G$ be a graph with vertex set $V(G)$ and edge set $E(G)$.
For $v\in V(G)$, we denote by $N_G(v)$ the neighborhood of $v$ in $G$, and $\delta_G(v)$ the degree of $v$ in $G$. Let $N_G[v]=\{v\}\cup N_G(v)$.  A pendant vertex is a vertex of degree one.
For a graph $G$ with $\emptyset\ne S\subseteq V(G)$, denote by $G[S]$ the subgraph of $G$ induced by $S$, and if $G[S]$ is edgeless, then $S$ is an independent set of $G$.
For simplicity, we denote $N_{G[S]}(v)$ by $N_S(v)$ and $\delta_{G[S]}(v)$ by $\delta_S(v)$ when $S\subset V(G)$. For $S,T\subset V(G)$, let $e(S,T)$ be the number of edges between $S$ and $T$. Particularly, if $S=T$, then $e(S)$ denotes the number of edges in $G[S]$.

For a graph $G$ with $E_1\subseteq E(G)$, denote by $G-E_1$ the graph with vertex set $V(G)$ and edge set
$E(G)\setminus E_1$, and in particular, we write $G-f$ for $G-\{f\}$ for $f\in E(G)$. If $E_2$ is a subset of the edge set of the complement of $G$, then $G+E_2$ denotes the graph with vertex set $V(G)$ and edge set $E(G)\cup E_2$, and in particular, we write $G+f$ for $G+\{f\}$ if $E_2=\{f\}$.
For a graph $G$ with $v\in V(G)$, $G-v$ denoted the graph obtained from $G$ by deleting $v$ and its incident edges.

For an $n$-vertex graph $G$, the adjacency matrix of $G$ is the $n\times n$ matrix
$A(G)=(a_{uv})_{u,v\in V(G)}$, where $a_{uv}=1$ if $uv\in E(G)$ and $a_{uv}=0$ otherwise.
For a positive integer $k$, let
$(a_{uv}^{(k)})_{u,v\in V(G)}$ be the  $k$-th power of $A(G)$. It is well known that
$a_{uv}^{(k)}$ counts the number of walks of length $k$ from $u$ to $v$ in $G$.

The following lemma is an immediate consequence of the Perron-Frobenius theorem.

\begin{lemma}\label{addedges}
Let $G$ be a graph and $u$ and $v$ be two nonadjacent vertices of $G$. If $G+uv$ is connected,  then $\rho(G+uv)> \rho(G)$.
\end{lemma}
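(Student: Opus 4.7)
The plan is to leverage the Perron--Frobenius theorem for the connected graph $G+uv$, extracting both a positive left Perron vector of $A(G+uv)$ and a nonnegative Perron eigenvector of $A(G)$, and then using the identity
\[
A(G+uv) = A(G) + e_u e_v^{\top} + e_v e_u^{\top}
\]
to separate the two spectral radii via a one-line inner-product computation. The weak monotonicity $\rho(G+uv)\ge \rho(G)$ is immediate from entry-wise dominance of the adjacency matrices, so the content is strict inequality.

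Concretely: since $G+uv$ is connected, $A(G+uv)$ is an irreducible nonnegative matrix, so it has a strictly positive left Perron eigenvector $p$ with $p^{\top} A(G+uv) = \rho(G+uv)\, p^{\top}$. Let $y\ge 0$, $y\ne 0$ be a Perron eigenvector of $A(G)$, so $A(G) y = \rho(G) y$. Applying $p^{\top}$ to both sides of $A(G+uv) y = \rho(G) y + y_v e_u + y_u e_v$ gives
\[
\bigl(\rho(G+uv) - \rho(G)\bigr)\, p^{\top} y \;=\; y_v p_u + y_u p_v.
\]
Since $p>0$ and $y\ne 0$, the inner product $p^{\top} y$ is strictly positive, and the right-hand side is nonnegative, so strict inequality in the lemma reduces to showing that $y_u > 0$ or $y_v > 0$.

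This last point is the only subtle step, and it is where the hypothesis that $G+uv$ is connected is used a second time: $G$ has at most two components, and if it has exactly two, they are linked by the edge $uv$, so each contains exactly one of $u,v$. Hence any nonnegative eigenvector of $A(G)$ associated with $\rho(G)$ is supported on a nonempty union of components, each of which contains one of $u$ or $v$, and on each component in its support it is a positive multiple of the (strictly positive) Perron vector of that component. Thus at least one of $y_u, y_v$ is strictly positive, which together with $p>0$ forces $y_v p_u + y_u p_v > 0$, and the displayed identity yields $\rho(G+uv) > \rho(G)$.
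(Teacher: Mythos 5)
Your proof is correct. The paper offers no argument of its own for this lemma --- it simply declares it ``an immediate consequence of the Perron--Frobenius theorem'' --- and what you have written is a complete and careful instantiation of exactly that consequence: pairing the positive left Perron vector of the irreducible matrix $A(G+uv)$ with a nonnegative Perron eigenvector of $A(G)$ and reading off the difference of spectral radii from the rank-two perturbation $e_ue_v^{\top}+e_ve_u^{\top}$. The one place where care is genuinely needed is the case of disconnected $G$ (at most two components, each containing one of $u,v$), where the naive Rayleigh-quotient estimate $\rho(G+uv)\ge \rho(G)+2y_uy_v$ can degenerate because one of $y_u,y_v$ may vanish; your left-eigenvector identity, which only requires \emph{one} of $y_u,y_v$ to be positive, handles this correctly.
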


If $G$ is a connected graph, then $A(G)$ is irreducible and so by Perron-Frobenius theorem, there exists a unique unit positive eigenvector of $A(G)$ corresponding to $\rho(G)$, which we call the Perron vector of $G$.

The following lemma is a special case of Lemma 6 in \cite{NR}, see also \cite{Row,SLB}.

\begin{lemma} \label{perron}
Let $G$ be a graph with $\{u,v\}\subset V(G)$ and $\emptyset\ne S\subseteq N_G(v)\setminus N_G[u]$. Let
$G'=G-\{vw: w\in S\}+\{uw: w\in S\}$. If $\mathbf{x}$ is the Perron vector of $G$ with $x_u\ge x_v$, then $\rho(G')>\rho(G)$.
\end{lemma}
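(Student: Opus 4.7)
The plan is to use the standard Rayleigh-quotient-plus-eigenvector-comparison argument. As noted just before the lemma, the phrase \emph{Perron vector} presupposes that $G$ is connected, so $\mathbf{x}$ is entrywise positive and I may take it to be a unit vector satisfying $A(G)\mathbf{x}=\rho(G)\mathbf{x}$ and $\rho(G)=\mathbf{x}^{T}A(G)\mathbf{x}$. By Rayleigh's principle applied to the symmetric matrix $A(G')$, I also have $\rho(G')\ge \mathbf{x}^{T}A(G')\mathbf{x}$, and this inequality requires nothing about $G'$ beyond symmetry of its adjacency matrix.

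First I would compute the difference of the two quadratic forms. The transition from $G$ to $G'$ deletes exactly the edges $\{vw:w\in S\}$, which are present in $G$ because $S\subseteq N_G(v)$, and inserts exactly the edges $\{uw:w\in S\}$, which are absent in $G$ because $S\cap N_G[u]=\emptyset$; in particular $u\notin S$, so the changes at the $u$- and $v$-rows do not collide. A direct expansion then gives
\[
\mathbf{x}^{T}A(G')\mathbf{x}-\mathbf{x}^{T}A(G)\mathbf{x}=2\sum_{w\in S}(x_{u}-x_{v})x_{w}=2(x_{u}-x_{v})\sum_{w\in S}x_{w}.
\]
Since $x_u\ge x_v$ by hypothesis and $x_w>0$ for every $w\in V(G)$, this is nonnegative, so $\rho(G')\ge \rho(G)$.

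The subtle step is upgrading this weak inequality to a strict one, the only concerning case being $x_u=x_v$, where the quadratic forms coincide. I would argue by contradiction: if $\rho(G')=\rho(G)$, then $\mathbf{x}$ attains the maximum Rayleigh quotient for $A(G')$ as well, and a standard argument (equality in Rayleigh's principle) forces $A(G')\mathbf{x}=\rho(G)\mathbf{x}$. Comparing the $v$-th coordinates of the eigenvalue equations for $G$ and $G'$,
\[
\rho(G)x_{v}=\sum_{w\in N_G(v)}x_{w}\quad\text{and}\quad\rho(G)x_{v}=\sum_{w\in N_G(v)\setminus S}x_{w},
\]
and subtracting yields $\sum_{w\in S}x_{w}=0$, which contradicts $S\ne\emptyset$ together with the positivity of the Perron entries.

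The main obstacle is precisely this boundary case of equal Perron entries, since the quadratic-form comparison degenerates there; the resolution is the eigenvector comparison above, which exploits the uniqueness/positivity of the Perron vector guaranteed by the Perron--Frobenius theorem for connected graphs. Everything else is routine bookkeeping, provided one uses the disjointness conditions $S\subseteq N_G(v)$ and $S\cap N_G[u]=\emptyset$ to ensure the deleted and inserted edges are genuine and nonoverlapping, so that the combinatorial count of changes in $\mathbf{x}^{T}A(\cdot)\mathbf{x}$ matches the algebraic formula.
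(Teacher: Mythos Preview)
Your argument is correct and is exactly the standard Rayleigh-quotient/Perron--Frobenius proof of this edge-switching inequality. The paper, however, does not supply its own proof of this lemma at all: it simply cites it as a special case of a result of Nikiforov and Rojo (with earlier variants due to Rowlinson and Simi\'c--Li~Marzi--Belardo), so there is no in-paper argument to compare against. What you wrote is essentially the proof one finds in those references, including the handling of the boundary case $x_u=x_v$ via the equality case of Rayleigh's principle.
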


For a nonnegative square matrix $M$, denote by $\lambda(M)$ its spectral radius.

Suppose that $V(G)$ is partitioned as $V_1\cup \dots\cup V_m$. For $1\le i<j\le m$, set $A_{ij}$ to be the submatrix of $A(G)$ with rows corresponding to vertices in $V_i$ and columns corresponding to vertices in $V_j$. The quotient matrix of $A(G)$ with respect to the partition $V_1\cup \dots \cup V_m$ the matrix  $B=(b_{ij})$, where $b_{ij}=\frac{1}{|V_i|}\sum_{u\in V_i}\sum_{v\in V_j}a_{uv}$. If $A_{ij}$ has constant row sum for  $1\le i<j\le m$, then we say $B$ is an equitable quotient matrix of $A(G)$.
The following lemma is an immediate consequence of \cite[Lemma 2.3.1]{BHa}

\begin{lemma}\label{quo}
For a connected graph $G$, if $B$ is an equitable quotient matrix of $A(G)$, then $\lambda(B)=\rho(G)$.
\end{lemma}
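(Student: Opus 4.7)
The plan is to use the characteristic matrix of the partition as a bridge between the spectra of $A(G)$ and $B$. Let $P$ be the $n\times m$ matrix whose $(v,i)$-entry equals $1$ if $v\in V_i$ and $0$ otherwise. The equitability hypothesis — that each block $A_{ij}$ has constant row sum equal to $b_{ij}$ — is exactly the matrix identity $A(G)P=PB$; establishing this rewrite is the first and most conceptual step. From it, any eigenvector $\mathbf{v}$ of $B$ with eigenvalue $\lambda$ yields $A(G)(P\mathbf{v})=P(B\mathbf{v})=\lambda P\mathbf{v}$, and because the columns of $P$ have pairwise disjoint supports, $P\mathbf{v}\ne 0$ whenever $\mathbf{v}\ne 0$.

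Next I would apply the Perron--Frobenius theorem to the nonnegative matrix $B$ to obtain a nonnegative eigenvector $\mathbf{v}\ge 0$, $\mathbf{v}\ne 0$, with eigenvalue $\lambda(B)$. By the identity above, $P\mathbf{v}$ is then a nonzero nonnegative eigenvector of $A(G)$ with the same eigenvalue $\lambda(B)$, so $\lambda(B)$ lies in the spectrum of $A(G)$ and hence $\lambda(B)\le \rho(G)$.

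For the matching lower bound I would use connectedness: $A(G)$ is irreducible, so by Perron--Frobenius the only nonnegative eigenvector of $A(G)$ (up to positive scaling) is the positive Perron vector $\mathbf{x}$, and it corresponds to the eigenvalue $\rho(G)$. Since $P\mathbf{v}\ge 0$ is also a nonnegative eigenvector of $A(G)$, it must be a positive scalar multiple of $\mathbf{x}$, which forces $\lambda(B)=\rho(G)$.

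The main obstacle is ensuring that the Perron--Frobenius step is applied cleanly — the fact that an irreducible nonnegative matrix admits, up to a positive scalar, a unique nonnegative eigenvector (and that this eigenvector corresponds precisely to the spectral radius) is the lemma that closes the loop, and it must be invoked rather than proved. A workable alternative would avoid this appeal by showing directly that the Perron vector $\mathbf{x}$ of $A(G)$ is constant on each class $V_i$ via an averaging argument that exploits equitability together with the uniqueness of $\mathbf{x}$; then $\mathbf{x}$ descends to an eigenvector of $B$ with eigenvalue $\rho(G)$, yielding $\rho(G)\le\lambda(B)$ and hence equality.
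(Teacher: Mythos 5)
Your proposal is correct and is essentially the standard argument behind the result the paper cites: the paper gives no proof of its own for this lemma, deferring to Lemma 2.3.1 of Brouwer--Haemers, and that proof is precisely your intertwining identity $A(G)P=PB$ combined with the Perron--Frobenius facts that a nonnegative matrix $B$ has a nonnegative eigenvector for $\lambda(B)$ and that an irreducible nonnegative matrix admits (up to positive scaling) only one nonnegative eigenvector, belonging to its spectral radius. The only point worth noting is that you correctly read ``equitable'' as requiring constant row sums in \emph{all} blocks $A_{ij}$, including the diagonal ones, which is what the identity $A(G)P=PB$ actually needs.
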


\section{Spectral conditions for existence of chorded cycles: Proof of Theorem \ref{chord1}}

\begin{proof}[Proof of Theorem \ref{chord1}]
Suppose that $G$ is an isolate-free graphs with $m$ edges  that does not contain a chorded cycle such that $\rho(G)$ is as large as possible, where $m\ge 4$.
It is easy to see (from Lemma \ref{quo} or a direct calculation) that $\rho(G_m)=\theta(m)$ for $m\le 8$,
and $\rho(K_{1,m})=\sqrt{m}$ for $m\ge 9$.  As $G_m$ for $m\le 8$ and $K_{1,m}$ for $m\ge 9$ do not have a chorded cycle, we have $\rho(G)\ge \theta(m)$ for $m\le 8$ and $\rho(G)\ge  \sqrt{m}$ for $m\ge 9$.
It suffices to show that \eqref{xz20} is satisfied.

Suppose that $G$ is disconnected. Then $\rho(G)=\rho(G_1)$ for some component $G_1$ of $G$. Let $m'=m-|E(G_1)|$. Let $v\in V(G_1)$. Let $H$ be the graph obtained from $G_1$ and $m'$ isolated vertices by adding edges to connect $v$ and all isolated vertices.
Evidently, $H$ is an isolate-free graph of size $m$ that does not contain a chorded cycle. By Lemma \ref{addedges}, $\rho(H)>\rho(G)$, which
is a contradiction. Thus, $G$ is connected.

Let $\rho=\rho(G)$ and let $\mathbf{x}$ be the Perron vector of $G$. Denote by $u$ a vertex with $x_u=\max\{x_w: w\in V(G)\}$. Let $X=N_G(u)$ and $Y=V(G)\setminus(\{u\}\cup X)$. As $G$ does not contain a  chorded cycle, $G[X]$ does not contain $P_3$ as a subgraph. So $G[X]$ consists of isolated vertices or independent edges. Denote by $X_0$ the set of isolated vertices in $G[X]$ and $X_1=X\setminus X_0$.

\begin{claim}\label{pen}
Each pendant vertex  of $G$ (if any exists) is adjacent to $u$.
\end{claim}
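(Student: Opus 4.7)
The plan is to argue by contradiction using the edge-shifting tool in Lemma~\ref{perron}. Suppose some pendant vertex $w$ of $G$ is not adjacent to $u$, and let $v$ be its unique neighbor, so that $v\ne u$. By the choice of $u$ as a vertex maximizing the Perron entry, $x_u\ge x_v$.

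I would then apply Lemma~\ref{perron} with the singleton set $S=\{w\}$. This choice is legitimate: $w\in N_G(v)$ since $v$ is its neighbor, and $w\notin N_G[u]$ because $w\ne u$ and by assumption $w$ is not adjacent to $u$. The lemma then delivers $\rho(G')>\rho(G)$ for $G'=G-vw+uw$, i.e.\ for the graph obtained by detaching the pendant edge at $v$ and reattaching it to $u$.

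To turn this into a contradiction to the extremal choice of $G$, I need to verify that $G'$ is still an isolate-free graph of size $m$ containing no chorded cycle. The size is preserved. The only vertex at risk of becoming isolated in $G'$ is $v$; but $\deg_G(v)\ge 2$, since otherwise $\{v,w\}$ would span a $K_2$-component of $G$, contradicting that $G$ is connected with $m\ge 4$. For the absence of a chorded cycle, the key observation is that $w$ is still a pendant vertex in $G'$ (now with unique neighbor $u$). Hence every cycle of $G'$ avoids $w$ and in particular avoids the new edge $uw$, so every cycle of $G'$ together with any chord of it lies already in $G-vw\subseteq G$. Since $G$ is chorded-cycle-free, so is $G'$, and the strict inequality $\rho(G')>\rho(G)$ contradicts the extremal choice of $G$.

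The proof is essentially a one-step invocation of Lemma~\ref{perron}; the only point that requires a moment of thought — and thus the main (small) obstacle — is noticing that $w$ remains pendant in $G'$, which forces any potential chorded cycle of $G'$ to reside already in $G$.
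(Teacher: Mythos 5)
Your proof is correct and follows essentially the same route as the paper: move the pendant edge to $u$ via Lemma~\ref{perron}, note the vertex stays pendant so no chorded cycle is created, and contradict the extremality of $G$. The extra verifications you supply (that $v$ does not become isolated, and that chords as well as cycles avoid a pendant vertex) are sound and only make explicit what the paper leaves implicit.
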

\begin{proof}
Suppose that $G$ has a pendant vertex $v\in Y$. Denote by $v'$ its unique neighbor. Let $G'=G-vv'+uv$.
Note that $v$ is still a pendant vertex of $G'$, so $G'$ does not contain a chorded cycle. However,  we have by Lemma \ref{perron} that $\rho(G')>\rho(G)$, a contradiction.
\end{proof}

\begin{claim}\label{a1b} If $X_1\ne \emptyset$ and $Y\ne \emptyset$, then
$e(X_1,Y)=0$.
\end{claim}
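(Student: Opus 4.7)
The plan is to argue by contradiction: suppose some $v \in X_1$ has a neighbor $w \in Y$, and let $v'$ be the unique neighbor of $v$ in $X$, so that $\{u, v, v'\}$ spans a triangle in $G$. The key geometric observation is that any cycle of length at least four containing both $u$ and $v$ (respectively $v$ and $v'$) automatically has $uv$ (resp.\ $vv'$) as a chord, so the goal reduces to building such a longer cycle through the edge $vw$.

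First I would take a shortest path $w = q_0, q_1, \dots, q_r$ in $G - v$ from $w$ to $X \setminus \{v\}$ whenever one exists, with $q_1, \dots, q_{r-1} \in Y$ and $q_r \in X \setminus \{v\}$. If $q_r = v'$, then $u, v, w, q_1, \dots, q_{r-1}, v'$ is a cycle in $G$ with chord $vv'$; if instead $q_r \in X \setminus \{v, v'\}$, then the edge $q_r u$ closes $u, v', v, w, q_1, \dots, q_{r-1}, q_r$ into a cycle with chord $uv$. Either way this produces a chorded cycle in $G$, contradicting the hypothesis on $G$.

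The remaining case is that no such path exists; then $v$ is a cut-vertex and the $(G - v)$-component $C$ containing $w$ satisfies $C \subseteq Y$ and $N_G(C) = \{v\}$. Here I plan to apply a Perron shift: setting $S = N_G(v) \cap C$ (nonempty, since $w \in S$) and
\[
G' := G - \{vc : c \in S\} + \{uc : c \in S\},
\]
Lemma~\ref{perron} gives $\rho(G') > \rho(G)$, while $G'$ remains isolate-free of size $m$. To contradict the maximality of $\rho(G)$ it suffices to verify that $G'$ is chorded-cycle-free. The crucial point is that, since the old $v$–$C$ edges have been deleted and $N_G(C) = \{v\}$, the only way for a cycle of $G'$ to traverse $C$ is through the new edges at $u$; hence any chorded cycle $D$ of $G'$ meeting $C$ must have the shape $u, c_1, q_1, \dots, q_t, c_2, u$ with $c_1, c_2 \in S$ and $q_i \in C$. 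Substituting $v$ for $u$ produces the cycle $v, c_1, q_1, \dots, q_t, c_2, v$ in $G$, and any chord of $D$ in $G'$—either an intra-$C$ edge (preserved in $G$) or an edge $u q_i$ with $q_i \in S$ (whose $G$-counterpart is $v q_i$)—transfers to a chord of this $G$-cycle, producing a chorded cycle in $G$, the desired contradiction. The hardest part will be the book-keeping in this last step: ruling out other possible shapes for $D$ in $G'$ and checking that the $u \leftrightarrow v$ substitution faithfully preserves every potential chord.
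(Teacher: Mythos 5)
Your proposal is correct and follows essentially the same route as the paper: construct a cycle through $u$, $v$, $v'$ and a path in $Y$ to obtain the chord $uv$ or $vv'$, and when no such path exists, observe that $v$ is a cut vertex and apply the Perron rotation of Lemma~\ref{perron} to move the $v$--$C$ edges to $u$ (the paper merely splits this last step into a cut-edge and a cut-vertex subcase). One cosmetic caveat: the opening ``key observation'' that any cycle of length at least four through $u$ and $v$ has $uv$ as a chord is false when $u$ and $v$ are consecutive on the cycle, but your explicit constructions do place them non-consecutively, so nothing is lost.
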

\begin{proof}
Suppose that $e(X_1,Y)\ne 0$. Let  $vw\in E(G)$ with $v\in X_1$ and $w\in Y$. Let $v'$ be the unique neighbor of $v$ in $X$.

If $w$ has a neighbor $v''$ in $X\setminus\{v\}$, then either $v''=v'$ and so $uvwv'u$ is a cycle with chord $vv'$ or
$v''\ne v'$ and so $uv'vwv''u$ is a cycle with chord $uv$, a contradiction in either case.
It follows that
$w$ does not have a neighbor in $X\setminus\{v\}$.

By Claim \ref{pen}, the component $H$ of $G[Y]$ containing $w$ can not be trivial. Suppose that $w$ is the unique vertex of $H$ that has a neighbor in $X$. Then  $vw$ is a cut edge of $G$.
Let  $G'=G-vw+uw$. Evidently, $uw$ is a cut edge of $G$, so $G'$ does not contain a chorded cycle. However,  we have $\rho(G')>\rho(G)$ by  Lemma \ref{perron}, a contradiction.
Let $w_1$ be a vertex in $V(H)\setminus\{w\}$ that   has  a neighbor, say $v_1$, in $X$.
Let $P$ be the path between $w$ and $w_1$ in $H$.
If $v_1=v'$, then $uvwPw_1v'u$ is a cycle with chord $vv'$, a contradiction. So $v_1\ne v'$.
If $v_1\ne v$, then $uv'vwPw_1v_1u$ is a cycle with chord $uv$, also a contradiction. So $v_1=v$.
From the arbitrariness  of $w_1$ it follows that $v$ is a cut vertex of $G$.
Let $G''=G-\{vz: z\in N_H(v)\}+\{uz: z\in N_H(v)\}$. Note that $u$ is a cut vertex of $G''$ and $G[\{v\}\cup V(H)]\cong G''[\{u\}\cup V(H)]$.
So $G''$ does not contain a chorded cycle. By Lemma \ref{perron}, $\rho(G'')>\rho(G)$, a contradiction. 
\end{proof}

\begin{claim}\label{bipartite}
If $X$ is independent, then $m\ge 9$ and $G\cong K_{1,m}$ or $K_{2,\frac{m}{2}}$ with even $m$.
\end{claim}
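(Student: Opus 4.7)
My plan is to split into three cases based on $|Y|=|V(G)\setminus N_G[u]|$ and, in each, either identify $G$ explicitly or contradict the maximality of $\rho(G)$ via Lemma~\ref{addedges} or a direct spectral estimate.

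First I would dispose of the sub-cases $|Y|\le 1$. If $Y=\emptyset$, then $V(G)=\{u\}\cup X$ with $X$ independent, so $G\cong K_{1,m}$ immediately. If $|Y|=1$, say $Y=\{y\}$, then Claim~\ref{pen} forces $\delta_{G}(y)\ge 2$ and $N_{G}(y)\subseteq X$, and I would then show $y\sim v$ for every $v\in X$: otherwise $G+yv$ is still chorded-cycle-free, since the only cycles through the new edge $yv$ are the $4$-cycles $yvuv'y$ with $v'\in N_{G}(y)\cap X$, and these have no chord (as $X$ is independent and $uy\notin E(G)$), so Lemma~\ref{addedges} would contradict extremality. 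Hence $G\cong K_{2,m/2}$ with $m$ even. For the bound $m\ge 9$ in both sub-cases, I would compare with $G_{m}$: when $m\le 8$, $G_{m}$ is chorded-cycle-free and is obtained from $K_{1,m}$ by adding at least $t=\lfloor m/3\rfloor\ge 1$ edges inside $N(u)$, so Lemma~\ref{addedges} gives $\theta(m)=\rho(G_{m})>\sqrt{m}=\rho(K_{1,m})=\rho(K_{2,m/2})$, contradicting the extremality of $G$.

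The substantive case is $|Y|\ge 2$, which I would rule out via a single spectral inequality. Iterating the eigenvalue equation at $u$ and using that $X$ is independent (so $\rho x_{v}=x_{u}+\sum_{w\in N_{G}(v)\cap Y}x_{w}$ for each $v\in X$) yields
\[
\rho^{2}x_{u}=|X|\,x_{u}+\sum_{w\in Y}|N_{G}(w)\cap X|\,x_{w}.
\]
Combining $x_{w}\le x_{u}$ with the edge-count identity $|X|+e(X,Y)=m-e(Y)$ (which follows from $e(X)=0$ and from $u$ having no neighbour in $Y$) gives
\[
\rho^{2}\le m-e(Y).
\]
If $e(Y)\ge 1$, then $\rho^{2}\le m-1$, contradicting extremality ($\rho\ge\sqrt{m}$ when $m\ge 9$, and $\rho\ge\theta(m)>\sqrt{m}$ when $m\le 8$). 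Otherwise $Y$ is independent and $G$ is bipartite with parts $\{u\}\cup Y$ and $X$, so Nosal's bound \cite{No,Nik} gives $\rho\le\sqrt{m}$. For $m\le 8$ this again contradicts extremality; for $m\ge 9$, the equality case together with isolate-freeness forces $G\cong K_{|Y|+1,|X|}$. The chorded-cycle-free hypothesis in $K_{|Y|+1,|X|}$ then requires $\min\{|Y|+1,|X|\}\le 2$, and because $|Y|+1\ge 3$ while each $y\in Y$ satisfies $|N_{G}(y)\cap X|\ge 2$, this forces $|X|=2$. But the Perron entries of $K_{|Y|+1,2}$ on the side of size $2$ strictly exceed those on the side of size $|Y|+1\ge 3$, contradicting $x_{u}=\max_{w}x_{w}$ since $u\notin X$.

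I expect the chief obstacle to be the $|Y|\ge 2$ branch, where the bound $\rho^{2}\le m-e(Y)$ is the unifying tool: the sub-case $e(Y)\ge 1$ closes with slack, while $e(Y)=0$ requires the equality case of Nosal's theorem together with a short Perron-maximum comparison on $K_{|Y|+1,2}$ to eliminate the last residual configuration.
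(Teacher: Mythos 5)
Your core estimate is the paper's own argument: counting walks of length two from $u$ gives $\rho^2\le \delta_G(u)+e(X,Y)=m-e(Y)$, which forces $m\ge 9$ and $e(Y)=0$ (using $\theta(m)>\sqrt m$ for $m\le 8$), after which the equality case of Nosal's bound yields a complete bipartite graph. Your handling of the branch $|Y|\ge 2$ — including the elimination of $K_{|Y|+1,2}$ with $|Y|+1\ge 3$ by comparing Perron entries across the two sides against the maximality of $x_u$ — is correct and is a slightly more explicit version of the paper's closing sentence. However, two of your side arguments fail as written.

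First, the $|Y|=1$ case: you add the edge $yv$ and invoke Lemma~\ref{addedges} to contradict extremality, but $G+yv$ has $m+1$ edges, so it does not lie in the class over which $G$ is assumed extremal (isolate-free graphs of size exactly $m$), and no contradiction follows. The subcase needs no separate treatment: your $|Y|\ge 2$ computation applies verbatim when $|Y|=1$ (then $e(Y)=0$ trivially, $G$ is bipartite with parts $X$ and $\{u,y\}$, and Nosal's equality case gives $G\cong K_{2,|X|}=K_{2,m/2}$, which is exactly the surviving configuration). Second, your justification of $\theta(m)>\sqrt m$ is structurally wrong: $G_m=K_1\vee(tK_2\cup(m-3t)K_1)$ has $m-t+1$ vertices while $K_{1,m}$ has $m+1$, so $G_m$ is \emph{not} obtained from $K_{1,m}$ by adding edges; it is obtained from $K_{1,m-t}$ by adding $t$ edges, which only yields $\theta(m)>\sqrt{m-t}$. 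The inequality you need is nevertheless true and can be verified directly: substituting $x=\sqrt m$ into the characteristic polynomial $p(x)=x^3-x^2+(t-m)x+m-3t$ gives $p(\sqrt m)=t(\sqrt m-3)<0$ for $4\le m\le 8$, so $\sqrt m$ lies below the largest root $\theta(m)$. With these two repairs your proof is complete and coincides in substance with the paper's.
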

\begin{proof}
As $X$ is independent, we have $a_{uv}^{(2)}=0$ for $v\in X$.
Note also that $a_{uv}^{(2)}=e(X,Y)$ for $v\in Y$.
From $\rho^2\mathbf{x}=A(G)^2\mathbf{x}$ at $u$, we have
\begin{align*}
\rho^2x_u & =a_{uu}^{(2)}x_u+\sum_{v\in N_G(u)}a_{uv}^{(2)}x_v+\sum_{v\in V(G)\setminus (N_G(u)\cup \{u\})}a_{uv}^{(2)}x_v\\
&\le \delta_G(u)x_u+\sum_{v\in Y}a_{uv}^{(2)}x_u\\
&= (\delta_G(u)+e(X,Y))x_u.
\end{align*}
That is,
\[
\rho^2\le \delta_G(u)+e(X,Y).
\]
As $\theta(m)> \sqrt{m}$ if $m\le 8$ and  $\rho^2\ge m=\delta_G(u)+e(X,Y)+e(Y)$, we have $m\ge 9$ and
$e(Y)=0$, and $\rho^2= m$. Thus $G$ is a bipartite graph with bipartition $(X, \{u\}\cup Y)$ and $\rho=\sqrt{m}$. This implies that $G$ is a complete bipartite graph. So the result follows by noting that $K_{1,m}$ and $K_{2,\frac{m}{2}}$ with even $m$ are the unique complete bipartite graphs with no chorded cycle.
\end{proof}

\begin{claim}\label{bo}
If $|X_1|=2$ and $6\le m\le 9$, then $m=9$ and $G\cong K_1\vee (K_2\cup 6K_1)$.
\end{claim}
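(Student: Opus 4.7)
The plan is to convert the Perron-vector identities around $u$, the pair $v_1,v_2$ forming the unique edge of $G[X]$, and the vertices in $Y$ into one cubic inequality for $\rho$, and then evaluate this inequality at $\theta(m)$ (for $m\le 8$) and at $3$ (for $m=9$).

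Let $X_1=\{v_1,v_2\}$. Summing the eigen-equations at $v_1,v_2$ yields $x_{v_1}+x_{v_2}=\frac{2x_u}{\rho-1}$. By Claim~\ref{a1b}, if $Y\neq\emptyset$ then $e(X_1,Y)=0$, and for each $v\in X_0$ (with $d_v:=|N_G(v)\cap Y|$) we have
\[
\rho x_v=x_u+\sum_{w\in N_Y(v)}x_w\le (1+d_v)x_u.
\]
Combining with the eigen-equation $\rho x_u=(x_{v_1}+x_{v_2})+\sum_{v\in X_0}x_v$ at $u$, and the edge count $|X_0|+e(X_0,Y)=m-3-e(Y)$, one obtains
\begin{equation}\label{eq-cubic-bo}
\rho^3-\rho^2-(m-1-e(Y))\rho+(m-3-e(Y))\le 0.
\end{equation}

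For $m\in\{6,7,8\}$ (so $t=\lfloor m/3\rfloor=2$), let $P(x)=x^3-x^2-(m-2)x+(m-6)$, whose largest root is $\theta(m)$; since $P(1)=2-m<0$ and $P(3)=18-2m>0$, we have $\theta(m)\in(1,3)$. The difference between the left side of \eqref{eq-cubic-bo} and $P(\rho)$ is $(e(Y)-1)\rho+(3-e(Y))$, which at $\rho=\theta(m)$ equals $e(Y)(\theta(m)-1)+(3-\theta(m))>0$ for every $e(Y)\ge 0$. Hence \eqref{eq-cubic-bo} forces $\rho<\theta(m)$, contradicting $\rho\ge\theta(m)$.

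For $m=9$, substituting $\rho=3$ into \eqref{eq-cubic-bo} gives $2e(Y)$, so $e(Y)\ge 1$ would force $\rho<3\le\rho$, a contradiction; hence $e(Y)=0$. Then \eqref{eq-cubic-bo} becomes $(\rho-3)(\rho^2+2\rho-2)\le 0$, and combined with $\rho\ge 3$ this forces $\rho=3$ with equality throughout. Equality in \eqref{eq-cubic-bo} imposes $x_w=x_u$ for every $w\in Y$ (each is non-pendant by Claim~\ref{pen}, so $d_w\ge 2$). Propagating through the eigen-equation at each $v\in X_0$ yields $x_v=(1+d_v)x_u/3$, and the eigen-equation at $u$ now reads $|X_0|+e(X_0,Y)=6$. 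The eigen-equation at each $w\in Y$ becomes $\sum_{v\in N_{X_0}(w)}(1+d_v)=9$; summing over $Y$ and using $\sum_v d_v^2\le |Y|\sum_v d_v=|Y|\,e(X_0,Y)$ (from $d_v\le |Y|$) gives $(1+|Y|)e(X_0,Y)\ge 9|Y|$, hence $|X_0|\le 6-9|Y|/(1+|Y|)<2$, contradicting $|X_0|\ge d_w\ge 2$. Thus $Y=\emptyset$, and $G\cong K_1\vee(K_2\cup 6K_1)$.

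The main obstacle is the equality analysis in the $m=9$ case, where the Perron-vector equalities must be chained from $Y$ through $X_0$ back to $u$, then combined with the convexity-type bound $\sum_v d_v^2\le |Y|\sum_v d_v$ to rule out every $Y\neq\emptyset$ configuration. The rest of the argument is a short polynomial comparison once \eqref{eq-cubic-bo} is derived.
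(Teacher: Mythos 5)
Your argument is correct in substance but takes a genuinely different route from the paper. The paper's proof of this claim is an exhaustive case analysis on $\max_{w\in Y}\delta_{X_0}(w)\in\{1,2,3\}$, which reduces to about ten concrete graphs ($H_1,H_2,H_3$, $F_1,F_2,F_3$ and $K_1\vee(K_2\cup jK_1)$) whose spectral radii are computed numerically and compared with $\theta(m)$ or $3$. You instead fold all configurations into the single inequality $\rho^3-\rho^2-(m-1-e(Y))\rho+(m-3-e(Y))\le 0$, obtained by chaining the eigen-equations $u\to X_0\to Y$ through $x_{v_1}+x_{v_2}=\frac{2x_u}{\rho-1}$ and $\rho x_v\le(1+d_v)x_u$, and then treat $e(Y)$ as the only free parameter. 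I checked the derivation (using $e(X)=1$, $e(X_1,Y)=0$ and $|X_0|+e(X_0,Y)=m-3-e(Y)$) and the $m=9$ equality analysis, including the identity $\sum_{v\in N_{X_0}(w)}(1+d_v)=9$ and the Cauchy-type bound $\sum_v d_v^2\le|Y|e(X_0,Y)$; these are all sound, and your method avoids the numerical case check entirely, at the price of a somewhat delicate rigidity argument when $\rho=3$.

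Two points need attention. First, a harmless slip: $P(1)=1-1-(m-2)+(m-6)=-4$, not $2-m$ (the sign conclusion is unaffected). Second, and more importantly, the inference ``$Q(\theta(m))>0$ and $Q(\rho)\le 0$, hence $\rho<\theta(m)$'' is not valid for an arbitrary cubic $Q$: the point $\theta(m)$ could lie between the two smallest roots of $Q$ while $\rho$ lies at or below the largest root. You must add that $Q$ is increasing (hence positive) on $[\theta(m),\infty)$; this holds because $Q''(x)=6x-2>0$ and $Q'(\theta(m))=3\theta(m)^2-2\theta(m)-(m-1-e(Y))\ge 3\theta(m)^2-2\theta(m)-(m-1)>0$ for $\theta(m)>2.5$ and $m\le 8$, and the same check at $x=3$ is needed for the $m=9$ step ``$e(Y)\ge1$ forces $\rho<3$.'' With that one line inserted, the proof is complete.
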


\begin{proof}
Suppose that $Y\ne\emptyset$. By Claim \ref{a1b}, there is a vertex $w\in Y$ adjacent to some vertex in $X_0$ such that $\delta_{X_0}(w)$ is as large as possible. As $m\le 9$ and $|X_1|=2$, we have $1\le \delta_{X_0}(w)\le 3$.

If $\delta_{X_0}(w)=3$, then $m=9$, $G\cong H_1$ (see Fig.~\ref{H123}), and it may be checked that  $\rho(G)=2.8156<3$, a contradiction.

Suppose that $\delta_{X_0}(w)=2$. Suppose that $|Y|\ge 2$. Let $z\in Y\setminus\{w\}$.
If $\delta_{X_0}(z)=2$, then $m=9$, $N_{X_0}(z)=N_{X_0}(w)$,  $G\cong H_2$ (see Fig.~\ref{H123}), so $\rho(G)=2.7321<3$, a contradiction. If $\delta_{X_0}(z)=1$, then $wz\in E(G)$ and $z$ is adjacent to one vertex in $N_{X_0}(w)$, which shows that $m=9$ and $G$ contains a chorded cycle, a contradiction. So $\delta_{X_0}(z)=0$. By  Claims \ref{pen} and \ref{a1b}, we have $m\ge 10$, a contradiction.
So $|Y|=1$, $m=7$, $G\cong H_3$ (see Fig.~\ref{H123}), and $\rho(G)=2.5035<\theta(7)$, a contradiction. %

\begin{figure}[htbp]
\centering
\begin{tikzpicture}
\draw  [black](0,0)--(-1,1)--(-1,-1)--(0,0);
\filldraw [black] (0,0) circle (2pt);
\filldraw [black] (-1,1) circle (2pt);
\filldraw [black] (-1,-1) circle (2pt);
\draw [black] (0,0)--(1,1)--(2,0)--(1,0)--(0,0);
\draw [black] (0,0)--(1,-1)--(2,0);
\filldraw [black] (1,1) circle (2pt);
\filldraw [black] (2,0) circle (2pt);
\filldraw [black] (1,0) circle (2pt);
\filldraw [black] (1,-1) circle (2pt);
\node at (0.4,-1.5) {$H_1$};
\end{tikzpicture}
\qquad
\begin{tikzpicture}
\draw  [black](0,0)--(-1,1)--(-1,-1)--(0,0);
\filldraw [black] (0,0) circle (2pt);
\filldraw [black] (-1,1) circle (2pt);
\filldraw [black] (-1,-1) circle (2pt);
\draw [black] (0,0)--(1,1)--(2,1)--(1,-1)--(0,0);
\draw [black] (1,1)--(2,-1)--(1,-1);
\filldraw [black] (1,1) circle (2pt);
\filldraw [black] (2,1) circle (2pt);
\filldraw [black] (2,-1) circle (2pt);
\filldraw [black] (1,-1) circle (2pt);
\node at (0.4,-1.5) {$H_2$};
\end{tikzpicture}
\qquad
\begin{tikzpicture}
\draw  [black](0,0)--(-1,1)--(-1,-1)--(0,0);
\filldraw [black] (0,0) circle (2pt);
\filldraw [black] (-1,1) circle (2pt);
\filldraw [black] (-1,-1) circle (2pt);
\draw [black] (0,0)--(1,1)--(2,0)--(1,-1)--(0,0);
\filldraw [black] (1,1) circle (2pt);
\filldraw [black] (2,0) circle (2pt);
\filldraw [black] (1,-1) circle (2pt);
\node at (0.4,-1.5) {$H_3$};
\end{tikzpicture}
\caption{The graphs $H_1$, $H_2$ and $H_3$.}
\label{H123}
\end{figure}
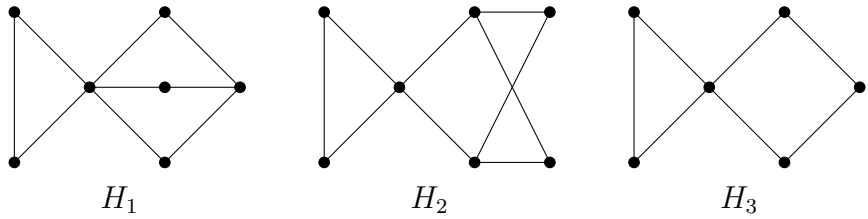

Suppose that $\delta_{X_0}(w)=1$. Denote by $v$ the neighbor of $w$ in $X_0$. If $v$ is a cut vertex of $G$, then
\[
G'=G-\{vz:z\in N_Y(v)\}+\{uz:z\in N_Y(v)\}
\]
is an isolate-free graph with $m$ edges without chorded cycle, and so by Lemma \ref{perron}, $\rho(G')>\rho(G)$, a contradiction. So $v$ lies on some cycle of $G$. It then follows that $m=8,9$ by Claims \ref{pen} and \ref{a1b}. For $m=8$, there is one possible graph $F_1$, see Fig.~\ref{F123}. For $m=9$, there are two possible graphs, $F_2$ and $F_3$, see Fig.~\ref{F123}. By direct calculation, $\rho(F_1)=2.4728<\theta(8)$, $\rho(F_2)=2.618<3$ and $\rho(F_3)=2.4562<3$, also a contradiction.

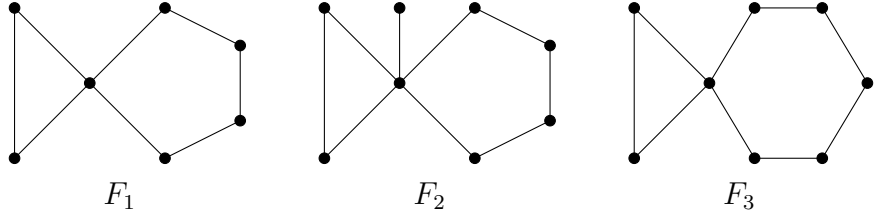
\begin{figure}[htbp]
\centering
\begin{tikzpicture}
\draw  [black](0,0)--(-1,1)--(-1,-1)--(0,0);
\filldraw [black] (0,0) circle (2pt);
\filldraw [black] (-1,1) circle (2pt);
\filldraw [black] (-1,-1) circle (2pt);
\draw [black] (0,0)--(1,1)--(2,0.5)--(2,-0.5)--(1,-1)--(0,0);
\filldraw [black] (1,1) circle (2pt);
\filldraw [black] (2,0.5) circle (2pt);
\filldraw [black] (2,-0.5) circle (2pt);
\filldraw [black] (1,-1) circle (2pt);
\node at (0.4,-1.5) {$F_1$};
\end{tikzpicture}
\qquad
\begin{tikzpicture}
\draw  [black](0,0)--(-1,1)--(-1,-1)--(0,0);
\filldraw [black] (0,0) circle (2pt);
\filldraw [black] (-1,1) circle (2pt);
\filldraw [black] (-1,-1) circle (2pt);
\draw [black] (0,0)--(1,1)--(2,0.5)--(2,-0.5)--(1,-1)--(0,0);
\filldraw [black] (1,1) circle (2pt);
\filldraw [black] (2,0.5) circle (2pt);
\filldraw [black] (2,-0.5) circle (2pt);
\filldraw [black] (1,-1) circle (2pt);
\draw [black] (0,0)--(0,1);
\filldraw[black] (0,1) circle (2pt);
\node at (0.4,-1.5) {$F_2$};
\end{tikzpicture}
\qquad
\begin{tikzpicture}
\draw  [black](0,0)--(-1,1)--(-1,-1)--(0,0);
\filldraw [black] (0,0) circle (2pt);
\filldraw [black] (-1,1) circle (2pt);
\filldraw [black] (-1,-1) circle (2pt);
\draw [black] (0,0)--(0.6,1)--(1.5,1)--(2.1,0)--(1.5,-1)--(0.6,-1)--(0,0);
\filldraw [black] (0.6,1) circle (2pt);
\filldraw [black] (1.5,1) circle (2pt);
\filldraw [black] (1.5,-1) circle (2pt);
\filldraw [black] (0.6,-1) circle (2pt);
\filldraw[black] (2.1,0) circle (2pt);
\node at (0.4,-1.5) {$F_3$};
\end{tikzpicture}
\caption{The graphs $F_1$, $F_2$, and $F_3$.}
\label{F123}
\end{figure}

Now we have proved that $Y=\emptyset$ if  $|X_1|=2$. In this case,
\[
G\cong \begin{cases}
K_1\vee (K_2\cup 3K_1)&\mbox{ if }m=6,\\
K_1\vee (K_2\cup 4K_1)&\mbox{ if }m=7,\\
K_1\vee (K_2\cup 5K_1)&\mbox{ if }m=8,\\
K_1\vee (K_2\cup 6K_1)&\mbox{ if }m=9.
\end{cases}
\]
By a direct calculation, if $6\le m\le 8$, then we have
\begin{align*}
\rho(G)&=\begin{cases}
2.5141&\mbox{ if }m=6\\
2.6813&\mbox{ if }m=7\\
2.8434&\mbox{ if }m=8
\end{cases}\\
&<\theta(m),
\end{align*}
a contradiction.  Thus  $m=9$ and $G\cong K_1\vee (K_2\cup 6K_1)$.
\end{proof}

Suppose first that $m\le 9$. By definition, $|X_1|$ is even and $|X_1|\le 6$.
If $|X_1|=6$, then $m=9$ and $G\cong K_1\vee 3K_2$.
If $|X_1|=4$, then as $G$ is connected and by Claims \ref{pen} and \ref{a1b}, we have $m=6$ and $G\cong K_1\vee 2K_2$, $m=7$ and $G\cong K_1\vee (2K_2\cup K_1)$,  $m=8$ and $G\cong K_1\vee (2K_2\cup 2K_1)$, or $m=9$ and $G\cong K_1\vee (2K_2\cup 3K_1)$.
If $|X_1|=2$, then we have by Claims \ref{pen} and \ref{a1b} that  
$G\cong K_1\vee (K_2\cup K_1)$ if $m=4$, and $G\cong K_1\vee (K_2\cup 2K_1)$ if $m=5$, and we have by Claim \ref{bo} that $m=9$,
and  $G\cong K_1\vee (K_2\cup 6K_1)$ if $m=6,\dots, 9$.
If  $X_1=\emptyset$, then we have by Claim \ref{bipartite} that $m=9$ and $G\cong K_{1,9}$. This proves \ \eqref{xz20} for $m\le 8$ and $m=9$.

Suppose next that $m\ge 10$.
It suffices to show that $X$ is independent by Claim \ref{bipartite}.
Suppose that $X$ is not independent, i.e., $X_1\ne \emptyset$.  Assume that $v_1v_2$ is an edge in $G[X]$. By Claim \ref{a1b}, $\delta_G(v_1)=\delta_G(v_2)=2$. From $\rho\mathbf{x}=A(G)\mathbf{x}$ at $v_1$ and $v_2$, respectively, we have $x_{v_1}=x_{v_2}:=x_1$ and
\begin{equation}\label{v12}
x_1=\frac{1}{\rho-1}x_u\le \frac{1}{\sqrt{m}-1}x_u.
\end{equation}
Let $G^*$ be the graph obtained from $G$ by deleting the edge $v_1v_2$ and adding a new vertex $v_3$ adjacent to $u$. Then $G^*$ is an isolate-free  graph with $m$ edges that does not contain a  chorded cycle.
Let $\mathbf{y}$ be the  vector corresponding to vertices in $G^*$ with
\[
y_w=\begin{cases}
x_w&\mbox{ if }w\in V(G)\setminus\{v_1,v_2\},\\
\frac{\sqrt{6}}{3}x_1&\mbox{ if }w=v_1,v_2,v_3.
\end{cases}
\]
Evidently, $\mathbf{y}$ is unit as $\mathbf{x}$ is unit.
Note that
\begin{align*}
\mathbf{y}^\top A(G^*)\mathbf{y}&=2\sum_{wz\in E(G^*)}y_wy_z\\
&=2\sum_{wz\in E(G)}x_wx_z-2x_1^2-4x_ux_1+2\sqrt{6}x_ux_1\\
&=\rho+2\left((\sqrt{6}-2)x_u-x_1\right)x_1\\
&\ge \rho+2\left(\sqrt{6}-2-\frac{1}{\sqrt{m}-1} \right)x_ux_1,
\end{align*}
where the last inequality follows from Eq. \eqref{v12}.
As $m\ge 10$, we have $\sqrt{6}-2-\frac{1}{\sqrt{m}-1}>0$, so by Rayleigh's principle,
\[
\rho(G^*)\ge \mathbf{y}^\top A(G^*)\mathbf{y}>\rho,
\]
a contradiction. This proves \eqref{xz20} for $m\ge 10$.
\end{proof}

\section{Spectral conditions for existence of $(2k-3)$-chorded $(2k+1)$-cycles: Proof of Theorem  \ref{Cai2+}}

We need the following lemmas.

\begin{lemma} \label{EG} \cite{BGLS} Let $G$ be a connected graph on $n$ vertices containing no path on $k+1$ vertices, where $n>k\ge 3$. Then
\[
|E(G)|\le \max\left\{\binom{k-1}{2}+n-k+1, \binom{\lceil\frac{k+1}{2}\rceil}{2}+\left\lfloor\frac{k-1}{2}\right\rfloor
\left(n-\left\lceil\frac{k+1}{2}\right\rceil\right)\right\}.
\]
If equality occurs, then $G$ is either $G_{n,k,1}$ or $G_{n,k,\lfloor\frac{k-1}{2}\rfloor}$, where
$G_{n,k,s}=(K_{k-2s}\cup (n-k+s)K_1)\vee K_s$ for $k>2s>0$.
\end{lemma}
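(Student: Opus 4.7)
The plan is to establish Lemma \ref{EG} by induction on $n$ with $k \ge 3$ held fixed, adapting the Kopylov-type argument refined in \cite{BGLS}. I would first check the extremal constructions: in $G_{n,k,s} = (K_{k-2s} \cup (n-k+s)K_1) \vee K_s$, any path must visit a vertex of the small clique $K_s$ every time it transitions through an isolated vertex, and a direct book-keeping argument caps the longest path at $k$ vertices. A computation gives
\[
|E(G_{n,k,s})| = \binom{s}{2} + \binom{k-2s}{2} + s(n-s),
\]
and specializing to $s=1$ and $s = \lfloor (k-1)/2 \rfloor$ recovers the two terms in the stated maximum.

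For the base case $n = k+1$, any $P_{k+1}$-free graph misses at least one edge of $K_{k+1}$, giving $|E(G)| \le \binom{k+1}{2}-1$, which a direct comparison shows is within the stated bound. For the inductive step with $n \ge k+2$, I would split on $\delta(G)$. If $\delta(G) \le \lfloor (k-1)/2 \rfloor$, pick a vertex $v$ of minimum degree; $G - v$ is $P_{k+1}$-free on $n-1 \ge k+1$ vertices, and the induction hypothesis (applied to each connected component of $G-v$, with a direct bound of $\binom{k}{2}$ on any component of size at most $k$) combined with $|E(G)| \le |E(G - v)| + \delta(G)$ closes this case after a numerical comparison.

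If $\delta(G) \ge \lfloor (k-1)/2 \rfloor + 1$, I argue structurally via a longest path $P = v_1 v_2 \cdots v_\ell$. Since $G$ contains no $P_{k+1}$, $\ell \le k$, and by maximality all neighbors of $v_1$ and $v_\ell$ lie on $P$. A P\'osa rotation argument forbids the simultaneous presence of $v_1 v_{i+1}$ and $v_i v_\ell$: such a pair would yield a cycle covering $V(P)$, which combined with the connectedness of $G$ and $n > \ell$ produces a path longer than $P$, a contradiction. This gives $d(v_1) + d(v_\ell) \le \ell \le k$. Combined with the lower bound on $\delta(G)$, this forces $\ell = k$ and, after iterating the rotation on multiple extremal vertices, pins the high-degree endpoints into a common neighborhood of size $s = \lfloor (k-1)/2 \rfloor$, which plays the role of the $K_s$ in $G_{n,k,s}$.

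The hardest part will be the equality characterization, since the bound is a maximum of two expressions whose crossover depends delicately on $n$ relative to $k$. Ruling out intermediate structures $G_{n,k,s}$ for $2 \le s \le \lfloor (k-1)/2 \rfloor - 1$ requires propagating the tightness of the P\'osa inequality through structural rigidity: every inequality in the chain above must be an equality, forcing the neighborhoods of low-degree vertices to coincide exactly and the complement of the common neighborhood to split as $K_{k-2s} \cup (n-k+s)K_1$. Matching the two regimes then identifies $G$ with either $G_{n,k,1}$ (first term dominates) or $G_{n,k,\lfloor (k-1)/2 \rfloor}$ (second term dominates), according to which candidate is attained.
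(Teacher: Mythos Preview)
The paper does not prove this lemma: it is quoted directly from \cite{BGLS} and used as a black box. So there is no ``paper's own proof'' to compare against, and your task reduces to whether your sketch actually establishes the result.

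Your base case is broken. For $n=k+1$ you bound $|E(G)| \le \binom{k+1}{2}-1$ and assert that ``a direct comparison shows [this] is within the stated bound.'' It is not. Take $k=3$, $n=4$: the stated maximum is $\max\{\binom{2}{2}+2,\ \binom{2}{2}+1\cdot 2\}=3$, while $\binom{4}{2}-1=5$. The trivial observation that a $P_{k+1}$-free graph is not $K_{k+1}$ is far too weak here; the base case already requires the full strength of the lemma for $n=k+1$, and you cannot get it by deleting a single edge.

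The low-degree branch of your induction has a related arithmetic problem. Removing a vertex of degree at most $\lfloor(k-1)/2\rfloor$ and invoking the bound for $n-1$ works cleanly when the second term of the maximum dominates, since that term grows by exactly $\lfloor(k-1)/2\rfloor$ as $n$ increases by one. But the first term grows by only $1$, so in the regime where the first term dominates (small $n$ relative to $k$) you may add up to $\lfloor(k-1)/2\rfloor$ edges and overshoot $\max\{A(n),B(n)\}$. You need a separate argument there, not ``a numerical comparison.''

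The high-degree branch is plausible in outline---P\'osa rotation does force $\ell=k$ and constrains endpoint neighborhoods---but the passage from ``iterating the rotation on multiple extremal vertices'' to the exact structure $G_{n,k,\lfloor(k-1)/2\rfloor}$ is where the real work in \cite{BGLS} lies, and you have only gestured at it. In particular, showing that the common neighborhood of all rotation endpoints has size exactly $\lfloor(k-1)/2\rfloor$, and that the rest of the graph splits as $K_{k-2s}\cup(n-k+s)K_1$, requires more than bookkeeping.
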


\begin{lemma} \label{C} \cite{EG} Let $k\ge 2$ and $G$ be an $n$-vertex graph in which every cycle has length at most $k$. Then $|E(G)|\le \frac{1}{2}k(n-1)$.
\end{lemma}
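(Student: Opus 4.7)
The plan is to prove the bound by induction on $n$, reducing to the $2$-connected case and then invoking Dirac's circumference theorem. For the base case $n\le k$, the hypothesis on cycle lengths is vacuous and $|E(G)|\le\binom{n}{2}=\frac{n(n-1)}{2}\le\frac{k(n-1)}{2}$, which suffices. For the inductive step I would assume $n>k$ and that the bound holds for all graphs on fewer vertices satisfying the hypothesis.

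I would first reduce to the $2$-connected case. If $G$ is disconnected with components of orders $n_1,\dots,n_c$, each component still satisfies the hypothesis, and applying induction to each yields $|E(G)|\le\sum_{i}\frac{k(n_i-1)}{2}=\frac{k(n-c)}{2}\le\frac{k(n-1)}{2}$. If $G$ is connected but has a cut vertex $v$, decompose $G$ as the union of subgraphs $G_1,\dots,G_c$ (with $c\ge 2$) meeting pairwise only at $v$; then $\sum_i(n_i-1)=n-1$, and each $G_i$ inherits the hypothesis since every cycle of $G_i$ is a cycle of $G$, so summing the inductive bounds on $|E(G_i)|$ gives $|E(G)|\le\frac{k(n-1)}{2}$.

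In the remaining case $G$ is $2$-connected with $n>k\ge 2$. Dirac's theorem guarantees a cycle of length at least $\min\{n,2\delta(G)\}$; since every cycle in $G$ has length at most $k<n$, this forces $2\delta(G)\le k$, i.e., $\delta(G)\le\lfloor k/2\rfloor$. Let $v$ attain the minimum degree. Then $G-v$ still satisfies the hypothesis (its cycles are cycles of $G$) and has $n-1\ge k$ vertices, so by induction $|E(G-v)|\le\frac{k(n-2)}{2}$, and $|E(G)|\le|E(G-v)|+\delta(v)\le\frac{k(n-2)}{2}+\frac{k}{2}=\frac{k(n-1)}{2}$, as required.

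The main obstacle is the $2$-connected case, where the argument hinges entirely on extracting a bound on $\delta(G)$ from the absence of long cycles. Dirac's circumference inequality delivers this, and once $\delta(G)\le k/2$ is in hand, a single vertex deletion closes the induction cleanly. The parity issue when $k$ is odd only makes the computed upper bound sharper, so nothing is lost there; the only bookkeeping subtlety is ensuring the reductions to disconnected or cut-vertex graphs feed smaller instances into the induction, which they always do since in both cases each piece has strictly fewer than $n$ vertices.
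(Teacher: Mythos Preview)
Your proof is correct. Note, however, that the paper does not supply its own proof of this lemma: it is quoted verbatim from Erd\H{o}s and Gallai~\cite{EG} and used as a black box in the proof of Theorem~\ref{Cai2+}. So there is no in-paper argument to compare against.

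Your route---induction on $n$, reducing to the $2$-connected case and then invoking Dirac's circumference bound $c(G)\ge\min\{n,2\delta(G)\}$ to force $\delta(G)\le k/2$---is a standard and clean way to recover the Erd\H{o}s--Gallai bound. The original 1959 proof is also inductive but argues more directly with longest paths and cycles rather than quoting Dirac; your version trades that combinatorial work for an appeal to a slightly earlier theorem, which makes the write-up shorter at the cost of an external dependency. Either way the bookkeeping in your disconnected and cut-vertex reductions is sound (each piece is strictly smaller, and the $(n_i-1)$'s telescope correctly), and the base case $n\le k$ is handled properly.
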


\begin{lemma} \label{C+} \cite{Bo,Ore}
If $G$ is a graph on $n\ge 2$ vertices that does not have an $n$-cycle, then $|E(G)|\le {n-1\choose 2}+1$.
\end{lemma}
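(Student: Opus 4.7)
My plan is to follow the classical rotation and closure argument due to Ore and Bondy. First I would reduce to the case where $G$ is maximally non-Hamiltonian (MNH), meaning $G$ is non-Hamiltonian but $G + e$ has a Hamilton cycle for every non-edge $e$. Given any non-Hamiltonian $G$, one adds edges one at a time while preserving non-Hamiltonicity to reach an MNH supergraph $G^*$ on the same vertex set with $|E(G^*)| \ge |E(G)|$, so it suffices to prove the bound for $G^*$.

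For $G^*$, the central step is to show the degree-sum inequality
\[
\deg_{G^*}(u) + \deg_{G^*}(v) \le n - 1
\]
for every pair of non-adjacent vertices $u, v$. By maximality, $G^* + uv$ has a Hamilton cycle necessarily using $uv$, which after deleting $uv$ becomes a Hamilton $u$-$v$ path $u = w_1, w_2, \ldots, w_n = v$ in $G^*$. If some index $i$ satisfied both $uw_i \in E(G^*)$ and $w_{i-1}v \in E(G^*)$, then $w_1 w_i w_{i+1} \cdots w_n w_{i-1} w_{i-2} \cdots w_2 w_1$ would be a Hamilton cycle of $G^*$, contradicting non-Hamiltonicity. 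Therefore the map $i \mapsto w_{i-1}$ injects $N_{G^*}(u)$ into $V(G^*) \setminus (N_{G^*}(v) \cup \{v\})$, yielding the inequality.

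Next I would choose a vertex $u$ of minimum degree $d = \delta(G^*)$. If $d = 0$, then $G^*$ has an isolated vertex and $|E(G^*)| \le \binom{n-1}{2}$ is immediate. Otherwise, applying the degree-sum bound shows both that every non-neighbor of $u$ has degree at most $n-1-d$ and (since otherwise one could pair $u$ with a non-neighbor of degree $\ge n/2$) that $d < n/2$, so $d \le \lfloor (n-1)/2 \rfloor$. Summing degrees gives
\[
2|E(G^*)| \le d + d(n-1) + (n-1-d)^2 = dn + (n-1-d)^2 =: f(d).
\]

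Finally, $f$ is a convex quadratic in $d$ with minimum at $d = (n-2)/2$, so on the integer interval $[1, \lfloor (n-1)/2 \rfloor]$ it attains its maximum at an endpoint. A direct computation gives $f(1) = n^2 - 3n + 4$, and $f(1) - f(\lfloor (n-1)/2 \rfloor)$ equals $(n-3)(n-5)/4$ for $n$ odd and $(n-4)^2/4$ for $n$ even, both nonnegative for $n \ge 3$; the tiny cases $n \in \{2,3\}$ are verified directly. Hence $2|E(G^*)| \le n^2 - 3n + 4$, i.e., $|E(G^*)| \le \binom{n-1}{2} + 1$, which completes the proof. The only real obstacle is the rotation step (ensuring the proposed Hamilton cycle visits each vertex exactly once and that the index $i$ is well-defined); the remaining pieces are routine estimates.
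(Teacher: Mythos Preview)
Your argument is correct and follows the classical Ore--Bondy route: pass to a maximally non-Hamiltonian supergraph, use the Hamilton path obtained from deleting the new edge to derive $\deg(u)+\deg(v)\le n-1$ for every non-adjacent pair, and then sum degrees around a vertex of minimum degree, optimizing the resulting convex quadratic in $d$. The rotation step, the injectivity of $w_i\mapsto w_{i-1}$, and the endpoint comparison of $f(d)$ all check out.

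As for comparison with the paper: there is nothing to compare. Lemma~\ref{C+} is merely quoted from the literature (the citations \cite{Bo,Ore}) and is used as a black box inside the proof of Claim~\ref{C1}; the paper provides no proof of its own. Your write-up is essentially the standard textbook proof of this Ore--Bondy bound, so it is entirely appropriate here. One very minor remark: for $n\ge 3$ a maximally non-Hamiltonian graph cannot have an isolated vertex (adding a single edge at such a vertex leaves it with degree~$1$, still non-Hamiltonian), so your ``$d=0$'' case is in fact vacuous in that range---but this does no harm to the argument.
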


A $k$-core of a
graph  is a maximal induced subgraph with minimum degree is at least $k$. It is easy to see that a $k$-core of a graph $G$ can be obtained from $G$ by iteratively removing the vertices of degree at most $k-1$ until the resulting graph is trivial or is of minimum degree at least $k$. We denote by $G^c$ the $(k-1)$-core of  $G$.

\begin{proof}[Proof of Theorem  \ref{Cai2+}]
Suppose that $G$ reaches the largest spectral radius among all graphs satisfying the condition of Theorem \ref{Cai2+}.

Let $\rho=\rho(G)$ and $\mathbf{x}$ be the Perron vector of $G$. Denote by $u$ a vertex with $x_u=\max\{x_w:w\in V(G)\}$.
Let $X=N_G(u)$, 
and $Y=V(G)\setminus (\{u\}\cup X)$. Evidently, $|X|=\delta_G(u)$.

As $\rho\ge \frac{k-1+\sqrt{4m-k^2+1}}{2}$, we have $\rho^2-(k-1)\rho\ge m-\frac{k(k-1)}{2}=|X|+e(X)+e(X,Y)+e(Y)-\frac{k(k-1)}{2}$, i.e.,
\[
\rho^2-(k-1)\rho-|X|\ge e(X)+e(X,Y)+e(Y)-\frac{k(k-1)}{2}.
\]

From $\rho\mathbf{x}=A(G)\mathbf{x}$ and $\rho^2\mathbf{x}=A^2(G)\mathbf{x}$ at $u$, we have
\[
\rho x_u=\sum_{z\in X}x_z\mbox{ and }
\rho^2 x_u=\delta_G(u)x_u+\sum_{z\in X}\delta_{X}(z)x_z+\sum_{z\in Y}\delta_{X}(z)x_z,
\]
so
\[
(\rho^2-(k-1)\rho-\delta_G(u)) x_u=\sum_{z\in X}\left(\delta_{X}(z)-k+1\right)x_z+\sum_{z\in Y}\delta_{X}(z)x_z.
\]
Thus
\[
e(Y)\le \sum_{z\in X}\left(\delta_{X}(z)-k+1\right)\frac{x_z}{x_u}-e(X)+\sum_{z\in Y}\delta_{X}(z)\frac{x_z}{x_u}-e(X,Y)+\frac{k(k-1)}{2}.
\]
For each induced subgraph $H$ of $G[X]$, let
\[
\eta(H)=\sum_{z\in V(H)}\left(\delta_H(z)-k+1\right)\frac{x_z}{x_u}-|E(H)|.
\]
For convenience, we write $\eta(X)$ for $\eta(G[X])$. 
Then
\begin{equation} \label{bo}
e(Y) \le \eta(X)+\sum_{z\in Y}\delta_{X}(z)\frac{x_z}{x_u}-e(X,Y)+\frac{k(k-1)}{2}.
\end{equation}
Evidently, equality holds in \eqref{bo} if and only if $\rho^2-(k-1)\rho= m-\frac{k(k-1)}{2}$.
From \eqref{bo}, we have
\begin{equation}\label{e6}
e(Y)\le \eta(X)+\frac{k(k-1)}{2},
\end{equation}
and equality holds in \eqref{e6} if and only if $\rho^2-(k-1)\rho=m-\frac{k(k-1)}{2}$, and $x_z=x_u$ for every $z\in Y$ with $\delta_X(z)\ge 1$.
As $e(Y)\ge 0$, we have
\begin{equation}\label{e7}
\eta(X)\ge -\frac{k(k-1)}{2}.
\end{equation}

Denote by $\mathcal{C}$ the set of components of $G[X]$.
Then $\eta(X)=\sum_{H\in \mathcal{C}}\eta(H)$.
\begin{claim}\label{N2}
For each $H\in \mathcal{C}$,
$\eta(H)\le \eta(H^c)$ with equality if and only if $H=H^c$.
\end{claim}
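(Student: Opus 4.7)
The plan is to prove the claim by following the peel-off process that defines the $(k-1)$-core: at each step delete one vertex whose current degree is at most $k-2$, and show that this single operation strictly increases $\eta$. After finitely many steps the remaining subgraph is exactly $H^c$, which yields $\eta(H) \le \eta(H^c)$, with equality precisely when no peel is ever performed, i.e., when $H = H^c$.

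For the one-step bound, let $v \in V(H)$ satisfy $d := \delta_H(v) \le k-2$, and set $H' = H-v$. Reading the definition
\[
\eta(H) = \sum_{z \in V(H)} (\delta_H(z) - k + 1) \frac{x_z}{x_u} - |E(H)|
\]
off term by term, deleting $v$ removes the summand at $v$, drops $|E(H)|$ by $d$, and decreases $\delta_H(z)$ by $1$ for each $z \in N_H(v)$. Collecting contributions gives
\[
\eta(H) - \eta(H') = (d-k+1)\frac{x_v}{x_u} + \sum_{z \in N_H(v)} \frac{x_z}{x_u} - d = (d-k+1)\frac{x_v}{x_u} - \sum_{z \in N_H(v)} \left(1 - \frac{x_z}{x_u}\right).
\]
Both terms on the right are non-positive: the first since $d-k+1 \le -1$ and $x_v \ge 0$, the second since $x_u$ is the maximum entry of the Perron vector so $1 - x_z/x_u \ge 0$ for every $z$. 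Because $G$ may be assumed connected (by an initial reduction analogous to the one in the proof of Theorem~\ref{chord1}), the Perron vector is strictly positive, so $x_v > 0$; the first term is then strictly negative and hence $\eta(H) < \eta(H')$.

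Iterating yields the claim: repeated peeling produces a strictly increasing sequence of $\eta$-values terminating at $\eta(H^c)$, so $\eta(H) < \eta(H^c)$ as soon as at least one peel occurs, while $\eta(H) = \eta(H^c)$ is immediate when $H = H^c$. The argument rests entirely on the short identity above combined with the two bounds $d \le k-2$ and $x_z \le x_u$, so there is no serious obstacle; the only point requiring a little care is the strict positivity of $x_v$, which is precisely what upgrades the weak inequality $\eta(H) \le \eta(H')$ to the strict one needed for the equality characterization.
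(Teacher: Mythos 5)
Your proposal is correct and follows essentially the same route as the paper: peel off a vertex $v$ with $\delta_H(v)\le k-2$, compute the same one-step identity for $\eta(H)-\eta(H-v)$, bound it strictly below $0$ using $x_z\le x_u$ and the positivity of $x_v$, and iterate down to $H^c$. The only cosmetic difference is that you regroup the terms as $-\sum_{z\in N_H(v)}(1-x_z/x_u)$ where the paper bounds $\sum_{z\in N_H(v)}x_z/x_u$ by $\delta_H(v)$ directly; your explicit remark about connectedness guaranteeing $x_v>0$ is a point the paper leaves implicit.
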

\begin{proof}
Suppose that $H\ne H^c$. Then there is a vertex $v\in V(H)$ with $\delta_H(v)\le k-2$, so
\begin{align*}
\eta(H)-\eta(H-v)&=\sum_{z\in V(H)}(\delta_H(z)-k+1)\frac{x_z}{x_u}-|E(H)|\\
&\quad -\sum_{z\in V(H-v)}(\delta_{H-v}(z)-k+1)\frac{x_z}{x_u}+|E(H-v)|\\
&=
(\delta_H(v)-k+1)\frac{x_v}{x_u}+\sum_{z\in N_{H}(v)}\frac{x_z}{x_u}-\delta_{H}(v)\\
&\le (\delta_H(v)-k+1)\frac{x_v}{x_u}\\
&<0,
\end{align*}
which is equivalent to $\eta(H)<\eta(H-v)$. Repeating this process, we have $\eta(H)<\eta(H^c)$.
\end{proof}

\begin{claim}\label{C1}
For any $H\in \mathcal{C}$, if $H^c$ is nontrivial with $|V(H^c)|=h$, then
\[
\eta(H)\le \eta(H^c)\le
\begin{cases}
-\frac{k(k-1)}{2}& \mbox{if $h> \frac{5}{2}(k-1)$},\\
0&  \mbox{if $h=2k-1$ and $H^c$ contains a $(2k-1)$-cycle},\\
-k+1& \mbox{otherwise}.
\end{cases}
\]
Moreover, if $h>\frac{5}{2}(k-1)$ and $\eta(H^c)=-\frac{k(k-1)}{2}$, then $H^c\cong K_{k-1}\vee (n-k+1)K_1$.
\end{claim}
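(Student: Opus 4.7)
The plan has three stages. \emph{Stage (i)}: reduce $\eta(H^c)$ to an edge count. Since $H^c$ is the $(k-1)$-core, every $z\in V(H^c)$ has $\delta_{H^c}(z)\ge k-1$, so the coefficient $\delta_{H^c}(z)-k+1\ge 0$, and combined with $x_z\le x_u$ this yields
\[
\eta(H^c)\le \sum_{z\in V(H^c)}(\delta_{H^c}(z)-k+1)-|E(H^c)|=|E(H^c)|-(k-1)h.
\]
Together with $\eta(H)\le \eta(H^c)$ from Claim \ref{N2}, everything reduces to bounding $|E(H^c)|$.

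\emph{Stage (ii)}: show $H^c$ has no $P_{2k}$ and is connected. If $v_1v_2\cdots v_{2k}$ were a $P_{2k}$ in $H^c\subseteq H\subseteq G[X]$, then $uv_1v_2\cdots v_{2k}u$ is a $(2k+1)$-cycle in $G$ whose chord set contains $uv_2,\dots,uv_{2k-1}$, giving $2k-2\ge 2k-3$ chords and contradicting the hypothesis of Theorem \ref{Cai2+}. If $H^c$ split into components $A$ and $B$, pick $a\in A$, $b\in B$, and a shortest $a$--$b$ path $Q$ in the connected graph $H$; by minimality its internal vertices avoid $V(H^c)$, and $|V(Q)|\ge 3$ since $a,b$ are non-adjacent in $H$ (otherwise they would share a component of $H^c$). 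Within $A$ and $B$ there are paths of at least $k$ vertices starting at $a$ and $b$ respectively (standard longest-path lemma using min degree $\ge k-1$), and splicing with $Q$ yields a path on at least $2k+1$ vertices in $H$, contradicting the no-$P_{2k}$ condition just established.

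\emph{Stage (iii)}: case analysis. With $H^c$ connected and $P_{2k}$-free, Lemma \ref{EG} applied with parameter $2k-1$ gives, for $h\ge 2k$,
\[
|E(H^c)|\le \max\Bigl\{\binom{2k-2}{2}+h-2k+2,\ (k-1)h-\tfrac{k(k-1)}{2}\Bigr\},
\]
and a direct comparison shows the second term strictly exceeds the first precisely when $h>5(k-1)/2$, giving $\eta(H^c)\le -k(k-1)/2$. For $h=2k-1$ containing a $(2k-1)$-cycle, the crude bound $|E(H^c)|\le \binom{2k-1}{2}$ gives $\eta(H^c)\le 0$; for $h=2k-1$ without such a cycle, Lemma \ref{C+} sharpens to $|E(H^c)|\le \binom{2k-2}{2}+1$ and $\eta(H^c)\le -2(k-1)+1\le -k+1$. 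For $k\le h\le 2k-2$, $|E(H^c)|\le \binom{h}{2}$ gives $\eta(H^c)\le h(h-2k+1)/2$, and the quadratic $h(h-2k+1)+2(k-1)$ has roots $h=1$ and $h=2k-2$, forcing this to be $\le -(k-1)=-k+1$ throughout. The only corner where $h>5(k-1)/2$ while $h<2k$ is $(k,h)=(2,3)$ with no triangle, i.e., $H^c=P_3$, for which direct computation gives $\eta(P_3)=-1=-k(k-1)/2$. For the moreover-assertion, equality $\eta(H^c)=-k(k-1)/2$ forces equality in Lemma \ref{EG} via the second extremal $G_{h,2k-1,k-1}\cong K_{k-1}\vee(h-k+1)K_1$ (since the other extremal $G_{h,2k-1,1}$ matches only the strictly smaller first term when $h>5(k-1)/2$) together with $x_z=x_u$ for every $z$ with $\delta_{H^c}(z)>k-1$, which lands exactly on the $K_{k-1}$-side---consistent with the Perron structure.

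The principal obstacle I anticipate is Stage (ii): simultaneously ruling out a $P_{2k}$ in $H^c$ and forcing $H^c$ to be connected. Once those two structural inputs are in hand, Stage (iii) becomes mechanical bookkeeping with Lemma \ref{EG} and Lemma \ref{C+}, and the threshold $5(k-1)/2$ emerges automatically as the point where the two Erd\H{o}s--Gallai extremals switch dominance.
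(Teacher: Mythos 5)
Your proof follows essentially the same route as the paper: bound $\eta(H^c)\le|E(H^c)|-(k-1)h$ using $\delta_{H^c}(z)\ge k-1$ and $x_z\le x_u$, then invoke Lemma \ref{EG} for $h\ge 2k$, Lemma \ref{C+} for $h=2k-1$ without a spanning cycle, and the trivial bound $\binom{h}{2}\le (k-1)(h-1)$ for $h\le 2k-2$; the threshold $\frac{5}{2}(k-1)$ and the extremal graph $K_{k-1}\vee(h-k+1)K_1$ emerge from the same comparison of the two terms in Lemma \ref{EG}. The one place you go beyond the paper is Stage (ii): the paper applies Lemma \ref{EG} to $H^c$ without verifying that $H^c$ is connected, which that lemma requires, so your connectivity argument is a genuine and needed addition. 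It contains a small slip, however: for a fixed pair $a\in A$, $b\in B$, a shortest $a$--$b$ path in $H$ may pass through other vertices of $V(H^c)$ (for instance if every connection from $A$ to the rest of $H$ leaves through a vertex of $A$ other than $a$). The standard fix is to choose $a$ and $b$ so as to minimize $d_H(a,b)$ over all pairs lying in distinct components of $H^c$; then the internal vertices of the shortest path do avoid $V(H^c)$, and splicing it with paths on at least $k$ vertices inside each of the two components (available from minimum degree $k-1$) yields the forbidden $P_{2k}$ as you intend.
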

\begin{proof}
By Lemma \ref{N2}, we have $\eta(H)\le\eta(H^c)$. As $\frac{x_z}{x_u}\le 1$ for every $z\in V(H^c)$, we have
\[
\eta(H^c)\le 2|E(H^c)|-(k-1)h-|E(H^c)|=|E(H^c)|-(k-1)h.
\]

Suppose first that $h\ge 2k$.
As $G$ does not contain a $(2k-3)$-chorded $(2k+1)$-cycle,
$H^c$ is $P_{2k}$-free, we have
by Lemma \ref{EG} that
\begin{align*}
|E(H^c)|&\le \max\left\{\binom{2k-2}{2}+h-2k+2, \binom{k}{2}+(k-1)(h-k)\right\}\\
&=\max\left\{h+2k^2-7k+5,(k-1)h-\frac{k(k-1)}{2}\right\}\\
&=\begin{cases}
(k-1)h-\frac{k(k-1)}{2}& \mbox{ if }h> \frac{5}{2}(k-1),\\
h+2k^2-7k+5& \mbox{ if }2k\le h\le \frac{5}{2}(k-1).
\end{cases}
\end{align*}
Thus
\[
\eta(H^c)\le
\begin{cases}
-\frac{k(k-1)}{2}& \mbox{ if $h> \frac{5}{2}(k-1)$},\\
-3k+5& \mbox{ if }2k\le h\le \frac{5}{2}(k-1).
\end{cases}
\]

Suppose next that $h=2k-1$. If $H^c$ contains $(2k-1)$-cycle, then as $H$ is $P_{2k}$-free, $H=H^c$, and so $\eta(H)=\eta(H^c)\le \binom{h}{2}-(k-1)h=\binom{2k-1}{2}-(k-1)h=0$.
If $H^c$ does not contain a $(2k-1)$-cycle, then by Lemma \ref{C+},
\[
|E(H^c)|\le {2k-2\choose 2}+1=(k-1)(2k-3)+1,
\]
so $\eta(H^c)\le (k-1)(2k-3)+1-(k-1)(2k-1)=-2k+3$.

If $h\le 2k-2$, then $|E(H^c)|\le \binom{h}{2}\le(k-1)(h-1)$, so
 $\eta(H^c)\le (k-1)(h-1)-(k-1)h=-k+1$.

If $h>\frac{5}{2}(k-1)$ and $\eta(H^c)=-\frac{k(k-1)}{2}$, then by above arguments, Lemma \ref{EG} and direct calculation, $H^c\cong G_{n,2k-2,k-1}$, i.e., $H^c\cong K_{k-1}\vee (h-k+1)K_1$.
\end{proof}

If $H^c$ is trivial, then it is obvious that $\eta(H^c)< 0$. So we have by Claims \ref{N2} and \ref{C1} that $\eta(H)\le 0$ for any $H\in \mathcal{C}$.

Let $Y_1=\{y\in Y: x_y\le \frac{1}{2}x_u\}$ and $Y_2=Y\setminus Y_1$.

\begin{claim}\label{C2}
If there is a component $H\in \mathcal{C}$ such that $H^c$ contains a $(2k-1)$-cycle, then $|V(H)|=|V(H^c)|=2k-1$. Moreover,
 $\max_{v\in V(H)}x_v> \frac{1}{2}x_u$ and $e(V(H),Y_1)\le k(k-1)$.
\end{claim}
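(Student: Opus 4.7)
The plan is to handle the three assertions in order, with the first being the substantive combinatorial step and the other two falling out of the inequalities \eqref{bo} and \eqref{e7} already on the table.

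For the size equality $|V(H)| = |V(H^c)| = 2k-1$, the key observation is that every vertex of $V(H) \subseteq X$ is adjacent to $u$, so any cycle inside $G[X]$ can be enlarged by a detour through $u$, picking up chords of the form $uv$ for free. Let $C = v_1 v_2 \cdots v_{2k-1} v_1$ be the given $(2k-1)$-cycle in $H^c$. Since $H^c$ has minimum degree at least $k-1$, certainly $|V(H^c)| \ge 2k-1$, so the task is to rule out $|V(H)| \ge 2k$. If that held, connectedness of $H$ would supply $w \in V(H) \setminus V(C)$ adjacent in $H$ to some $v_j$ on $C$, which one may take to be $v_1$, and then $u\, w\, v_1 v_2 \cdots v_{2k-1}\, u$ would be a $(2k+1)$-cycle in $G$ whose vertex set contains the $2k-2$ non-cycle edges $uv_1, uv_2, \ldots, uv_{2k-2}$ as chords. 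This would constitute a $(2k-3)$-chorded $(2k+1)$-cycle, contradicting the hypothesis on $G$; hence $|V(H)| = 2k-1 = |V(H^c)|$.

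For the Perron bound $\max_{v\in V(H)} x_v > \frac{1}{2} x_u$, I would argue by contradiction. Assuming $x_v/x_u \le 1/2$ for every $v \in V(H) = V(H^c)$, and using that the nonnegative weights $\delta_H(v) - k + 1$ sum to $2|E(H)| - (k-1)(2k-1)$, the defining expression for $\eta(H)$ is at most $-\frac{(k-1)(2k-1)}{2}$. Since every other component $H' \in \mathcal{C}$ has $\eta(H') \le 0$ by Claims \ref{N2} and \ref{C1}, this forces $\eta(X) \le -\frac{(k-1)(2k-1)}{2}$, which for $k \ge 2$ is strictly smaller than $-\frac{k(k-1)}{2}$ (the gap being $\frac{(k-1)^2}{2}$), contradicting \eqref{e7}. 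A byproduct of this discussion is that $\eta(X) \le 0$ in general.

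For the edge bound $e(V(H), Y_1) \le k(k-1)$, I would rewrite the term $-e(X,Y) + \sum_{z \in Y} \delta_X(z) \frac{x_z}{x_u}$ appearing in \eqref{bo} as $-\sum_{z \in Y} \delta_X(z)\bigl(1 - \frac{x_z}{x_u}\bigr)$, and then use $1 - \frac{x_z}{x_u} \ge \frac{1}{2}$ for $z \in Y_1$ to bound this above by $-\frac{1}{2} e(X, Y_1)$. Substituting into \eqref{bo}, discarding the nonnegative $e(Y)$, and invoking $\eta(X) \le 0$ yields $e(X, Y_1) \le k(k-1)$, so $e(V(H), Y_1) \le e(X, Y_1) \le k(k-1)$. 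The main obstacle is Part 1: constructing the $(2k+1)$-cycle with at least $2k-3$ chords from the given $(2k-1)$-cycle. The fact that $u$ is joined to every vertex of $V(H)$, so that the chords come entirely for free, is the crucial structural feature that makes the argument go; Parts 2 and 3 are then straightforward bookkeeping with the $\eta$ functional and \eqref{bo}.
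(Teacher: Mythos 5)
Your proof is correct and takes essentially the same route as the paper: the paper simply asserts the first statement (your explicit $(2k+1)$-cycle $u\,w\,v_1\cdots v_{2k-1}\,u$ with the $2k-2$ chords $uv_1,\dots,uv_{2k-2}$ is exactly the implicit reasoning), and your treatment of the other two parts --- bounding $\eta(H)\le-\tfrac{(k-1)(2k-1)}{2}<-\tfrac{k(k-1)}{2}$ against \eqref{e7}, and using \eqref{bo} with $1-\tfrac{x_z}{x_u}\ge\tfrac12$ on $Y_1$ together with $\eta(X)\le 0$ and $e(Y)\ge 0$ --- matches the paper's computations (stated there in contrapositive form).
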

\begin{proof}
The first statement follows as $G$ does not contain a $(2k-3)$-chorded $(2k+1)$-cycle.
If $\max_{v\in V(H)}x_v\le \frac{1}{2}x_u$, then we have
\begin{align*}
\eta(X)\le \eta(H)&=\sum_{z\in V(H)}(\delta_{H}(z)-k+1)\frac{x_z}{x_u}-|E(H)|\\
&\le \frac{1}{2}\sum_{z\in V(H)}(\delta_{H}(z)-k+1)-|E(H)|\\
&=-\frac{k-1}{2}|V(H)|\\
&=-\frac{k-1}{2}(2k-1),
\end{align*}
so $\eta(X)<-\frac{k(k-1)}{2}$, which contradicts \eqref{e7}.

If $e(V(H),Y_1)> k(k-1)$, then as $x_y\le \frac{1}{2}x_u$ for each $y\in Y_1$, we have
\[
\sum_{y\in N_Y(H)}\delta_{H}(y)\left(\frac{x_y}{x_u}-1\right)\le \sum_{y\in Y_1}\delta_{H}(y)\left(\frac{x_y}{x_u}-1\right)\le -\frac{1}{2}e(V(H),Y_1)<-\frac{k(k-1)}{2}
\]
and so from \eqref{bo}, together with Claim \ref{C1}, we have
\begin{align*}
e(Y)&\le \eta(X)+\sum_{z\in Y}\delta_X(z)\frac{x_z}{x_u}-e(X,Y)+\frac{k(k-1)}{2}\\
&\le\sum_{z\in Y}\delta_X(z)\left(\frac{x_z}{x_u}-1\right)+\frac{k(k-1)}{2}\\
&\le \sum_{y\in N_Y(H)}\delta_{H}(y)\left(\frac{x_y}{x_u}-1\right)+\frac{k(k-1)}{2}\\
&<0,
\end{align*}
also a contradiction.
\end{proof}

As $\rho\ge\frac{k-1+\sqrt{4m-k^2+1}}{2}$ and $m\ge \frac{(7k^2+6k+2)^2}{16}+\frac{k^2-1}{4}$, we have
\begin{equation}\label{rho}
\rho\ge \frac{7}{4}k^2+2k.
\end{equation}

\begin{claim}\label{C5}
For any $H\in \mathcal{C}$, $H^c$ is $C_{2k-1}$-free. 
\end{claim}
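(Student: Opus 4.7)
I would argue by contradiction. Assume that some $H\in\mathcal{C}$ has $H^{c}$ containing a $(2k-1)$-cycle $C:v_{1}v_{2}\cdots v_{2k-1}v_{1}$. By Claim~\ref{C2} we have $|V(H)|=|V(H^{c})|=2k-1$, so $H=H^{c}$ and $\delta_{H}(v)\ge k-1$ for every $v\in V(H)$; moreover there is $v^{*}\in V(H)$ with $x_{v^{*}}>\tfrac{1}{2}x_{u}$ and $e(V(H),Y_{1})\le k(k-1)$. The guiding observation is that since $u$ is adjacent to all $2k-1$ vertices of $V(H)$, any $(2k+1)$-cycle that contains $u$ together with most of $V(H)$ automatically collects many ``free'' chords of the form $uv_{\ell}$, and one only needs the $C_{2k-1}$ inside $H$ to furnish the remaining edges.

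\textbf{Structural steps.} My first step is to rule out any $y\in Y$ with two distinct neighbors $v_{a},v_{b}\in V(H)$. Given such $y$, I would construct a Hamilton cycle on $\{u,y,v_{1},\dots,v_{2k-1}\}$ of length $2k+1$ using the two edges $yv_{a},yv_{b}$, two edges $uv_{i},uv_{j}$ where $v_{i},v_{j}$ are cyclically consecutive in $C$ and chosen (based on whether $v_{a},v_{b}$ are consecutive in $C$ or not) so that the four vertices $\{v_{a},v_{b},v_{i},v_{j}\}$ split $C$ into two arcs covering the rest of $V(H)$, and the $2k-3$ remaining arc-edges of $C$. The $2k-3$ unused edges $uv_{\ell}$ are all chords, producing a forbidden $(2k-3)$-chorded $(2k+1)$-cycle. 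Next I would show that no $y\in Y$ with $y\sim v\in V(H)$ has a neighbor $z\in X\setminus V(H)$: picking a cycle-neighbor $v^{\star}$ of $v$ in $C$ and letting $v'$ be $v^{\star}$'s other cycle-neighbor, the cycle $u-v'-[\text{path in }C-v^{\star}\text{ from }v'\text{ to }v]-v-y-z-u$ has length $2k+1$, and the $2k-3$ edges $uv_{\ell}$ for $v_{\ell}\in V(H)\setminus\{v^{\star},v'\}$ are chords. Similarly, if $y_{1}y_{2}\in E(G[Y])$ with $y_{1}\sim v_{a}$, $y_{2}\sim v_{b}$, $v_{a}\ne v_{b}$, I would construct the cycle $u-\cdots-v_{a}-y_{1}-y_{2}-v_{b}-\cdots-u$ whose $V(H)$-part equals $V(H)\setminus\{v^{\star}\}$ for an appropriately chosen $v^{\star}$, and again count at least $2k-3$ chords.

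\textbf{Final step and main obstacle.} After the structural steps, the set of $Y$-vertices having a neighbor in $V(H)$ is extremely sparsely connected to $V(G)\setminus V(H)$. To finish, I would apply the Perron equation at $v^{*}$: combining $x_{v^{*}}>\tfrac{1}{2}x_{u}$ with $\rho x_{v^{*}}\le(1+\delta_{H}(v^{*})+\delta_{Y}(v^{*}))x_{u}\le(2k-1+\delta_{Y}(v^{*}))x_{u}$ and the size bound \eqref{rho} gives $\delta_{Y}(v^{*})>\tfrac{\rho}{2}-2k+1\gtrsim\tfrac{7}{8}k^{2}$. I would then invoke Lemma~\ref{perron} with the pair $(u,v^{*})$ and $S=N_{Y}(v^{*})\subseteq N_{G}(v^{*})\setminus N_{G}[u]$, obtaining a graph $G^{\dagger}=G-\{v^{*}y:y\in S\}+\{uy:y\in S\}$ with $\rho(G^{\dagger})>\rho(G)$. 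It remains to verify that $G^{\dagger}$ is still free of $(2k-3)$-chorded $(2k+1)$-cycles; any new such cycle must traverse one of the newly-added edges $uy$, and I would use the rigidity obtained in the structural steps to show that every configuration in which a new bad cycle could close reduces to a pattern already ruled out in $G$. This is precisely the hard part: it requires a careful case analysis of which $(2k+1)$-cycles can appear through the new $u$-$Y$ edges and reducing each of them to Step-1 or Step-2 forbidden data. Once carried out, the strict inequality $\rho(G^{\dagger})>\rho(G)$ contradicts the maximality of $\rho(G)$, completing the proof.
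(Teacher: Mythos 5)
Your setup and structural steps track the paper: Claim~\ref{C2} pins down $|V(H)|=2k-1$ and a vertex $v_0$ with $x_{v_0}>\tfrac12 x_u$, and your three cycle constructions amount to the paper's one-line observation that $N_X(y)=\{v_0\}$ for every $y\in N_Y(v_0)$ (any second $X$-neighbor or any $Y$-edge linking neighborhoods of distinct $H$-vertices closes a $(2k+1)$-cycle through $u$ carrying $2k-3$ chords $uv_\ell$). Those constructions are sound, modulo degenerate cases.

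The genuine gap is your final step. You replace the paper's conclusion with an edge-switching argument via Lemma~\ref{perron} (moving $N_Y(v^*)$ onto $u$), and you explicitly defer the verification that $G^{\dagger}$ still contains no $(2k-3)$-chorded $(2k+1)$-cycle. That verification is not a routine case check and cannot be waved through. After the switch, $u$ dominates all of $X$ \emph{and} all of $S=N_Y(v^*)$, a combination no vertex of $G$ had: a $(2k+1)$-cycle of the form $u$--$y$--(path through $Y$)--$w$--$u$ with $y\in S$ and $w\in X$ picks up a chord $uz$ for every cycle vertex $z\in X\cup S$, and its natural preimage in $G$ (replace the edge $uy$ by $uv^*,v^*y$) has length $2k+2$, so it does not ``reduce to a pattern already ruled out in $G$.'' Only cycles lying entirely in $\{u\}\cup Y$ transfer cleanly to cycles centered at $v^*$. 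Until the mixed cycles are handled, the proof is incomplete, and your own remark that the $Y$-neighbors of $V(H)$ are ``extremely sparsely connected'' points the wrong way: precisely because $N_X(y)=\{v_0\}$, each $y\in N_{Y_2}(v_0)$ must draw almost all of its Perron weight from inside $Y$, i.e.\ these vertices are forced to be \emph{densely} connected within $Y$.

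That observation is in fact the paper's much shorter finish, which avoids switching altogether. Splitting $N_Y(v_0)$ into $Y_1$ and $Y_2$ and using $e(V(H),Y_1)\le k(k-1)$ from Claim~\ref{C2}, the eigenvalue equation at $v_0$ forces $\delta_{Y_2}(v_0)>\tfrac12(\rho-k^2-3k+2)$. For each such $y$ one has $x_y>\tfrac12 x_u$ and $N_X(y)=\{v_0\}$, so $\rho x_y\le(1+\delta_Y(y))x_u$ gives $\delta_Y(y)\ge\tfrac12(\rho-2)$. Summing yields $e(Y)>\tfrac18(\rho-k^2-3k+2)(\rho-2)\ge\tfrac{k(k-1)}{2}$ by \eqref{rho}, contradicting $e(Y)\le\eta(X)+\tfrac{k(k-1)}{2}\le\tfrac{k(k-1)}{2}$ from \eqref{e6}. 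I recommend you abandon the switching route and close the argument by this counting.
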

\begin{proof}
Suppose that there is $H\in \mathcal{C}$ such that $H^c$ contains a $(2k-1)$-cycle.
By Claim \ref{C2}, $H=H^c$ and there is a vertex $v_0\in V(H)$ with $x_{v_0}>\frac{1}{2}x_u$. Then $\delta_{H}(v_0)\le |V(H)|-1\le 2k-2$.
By Claim \ref{C2} again, we have $\delta_{Y_1}(v_0)\le e(V(H),Y_1)\le k(k-1)$. So
\begin{align*}
\frac{1}{2}\rho x_u<\rho x_{v_0}&=x_u+\sum_{v\in N_{H}(v_0)}x_v+\sum_{z\in N_{Y_1}(v_0)}x_z+\sum_{z\in N_{Y_2}(v_0)}x_z\\
&\le \left(1+\delta_{H}(v_0)+\frac{1}{2}\delta_{Y_1}(v_0)+\delta_{Y_2}(v_0)\right)x_u\\
&\le \left(\delta_{Y_2}(v_0)+\frac{1}{2}k^2+\frac{3}{2}k-1\right)x_u.
\end{align*}
This implies that $\delta_{Y_2}(v_0)> \frac{1}{2}(\rho-k^2-3k+2)$.
Then, from  \eqref{rho}, we have $\rho\ge \frac{7}{4}k^2+2k>k^2+3k-2$, $\delta_{Y_2}(v_0)>0$.
As $G$ does not contain a $(2k-3)$-chorded $(2k+1)$-cycle, we have $N_X(y)=\{v_0\}$ for each $y\in N_{Y}(v_0)$.
As $x_y>\frac{1}{2}x_u$ for each $y\in N_{Y_2}(v_0)$,  we have from $\rho\mathbf{x}=A(G)\mathbf{x}$ at $y$ that
\[
\frac{1}{2}\rho x_u<\rho x_y=\sum_{z\in N_{H}(y)}x_z+\sum_{z\in N_{Y}(y)}x_z \le  \left(1+\delta_Y(y)\right)x_u,
\]
which implies that $\delta_Y(y)\ge\frac{1}{2}\left(\rho-2\right)$.
So
\[
\sum_{y\in N_Y(H)}\delta_Y(y)\ge \sum_{y\in N_{Y_2}(v_0)}\delta_Y(y)>\frac{1}{4}(\rho-k^2-3k+2)(\rho-2).
\]
From \eqref{rho}, we have $\rho-k^2-3k+2\ge \frac{3}{4}k^2-k+2\ge 2(k-1)$ and $\rho-2\ge \frac{7}{4}k^2+2k-2\ge 2k$. So
\[
e(Y)\ge \frac{1}{2}\sum_{y\in N_Y(H)}\delta_Y(y)>\frac{1}{8}\left(\rho-k^2-3k+2\right)\left(\rho-2\right)\ge \frac{k(k-1)}{2},
\]
contradicting the fact that $e(Y)\le \frac{k(k-1)}{2}$.
\end{proof}

For simplicity, we write $X^c$ for $G[X]^c$ and $V(G[X]^c)$.
Let $T=\{v\in X\setminus X^c: \delta_{X}(v)\le k-2\}$. 
Let $S=X\setminus (X^c\cup T)$ and $s=|S|$.

\begin{claim} \label{FK}
$\sum_{v\in S\cup X^c}\delta_X(v)\le (k-2)s+e(X^c)+e(X)$.
\end{claim}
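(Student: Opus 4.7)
The plan is to analyze the iterative peeling process producing $X^c$ from $G[X]$: vertices of current degree at most $k-2$ are removed one by one until the remaining subgraph, namely $G[X^c]$, has minimum degree at least $k-1$. Using the handshake identity $\sum_{v\in X}\delta_X(v)=2e(X)$ applied to the partition $X = T \cup S \cup X^c$, the claim is equivalent to
\[
e(X)-e(X^c)\le (k-2)s+\sum_{v\in T}\delta_X(v),
\]
so it suffices to bound the number of edges destroyed during peeling.

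To prove this reformulated inequality, I would list the peeled vertices as $v_1,\dots,v_t$, where $t=|T|+s$, in the order of removal, and let $d_i$ denote the degree of $v_i$ in the subgraph remaining at the moment it is peeled. By the definition of the core process, $d_i \le k-2$ for every $i$; moreover, each edge of $G[X]$ that does not survive into $G[X^c]$ is destroyed exactly when the first of its endpoints is peeled, so $\sum_{i=1}^{t} d_i = e(X)-e(X^c)$. I would split this sum according to whether $v_i\in T$ or $v_i\in S$: if $v_i\in T$, then $d_i\le\delta_X(v_i)$ since degrees can only decrease during peeling; if $v_i\in S$, then the defining bound $d_i\le k-2$ applies, and there are $s$ such indices. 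Adding the two contributions gives exactly the reformulated inequality, hence the claim.

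The argument is purely combinatorial and invokes none of the preceding spectral claims. The main pitfall is the peeling bookkeeping: one must check that every destroyed edge is charged to exactly one peeled endpoint (the one that leaves first) and that, for $v_i\in T$, the removal degree never exceeds the initial value $\delta_X(v_i)$ because earlier peelings can only delete, not create, edges incident to $v_i$.
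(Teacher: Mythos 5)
Your proof is correct and rests on the same key mechanism as the paper's: analyzing the $(k-1)$-core peeling process and using that each vertex of $S$ has degree at most $k-2$ at the moment it is removed. Your handshake reformulation (charging each destroyed edge to its first-peeled endpoint) is just a cleaner bookkeeping of the paper's computation, which instead orders the removals with $T$ first and evaluates $e(S)+e(S,X^c)=\sum_i\delta_{F_i}(v_i)\le (k-2)s$ directly.
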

\begin{proof}
It is trivial if $S=\emptyset$.
Suppose that $S\neq\emptyset$. Then $T\neq\emptyset$.
Note that $X^c$ can be obtained from $X$ by iteratively removing the vertices of $T\cup S$. Let $t=|T|$. Denote by $v_i$ the $i$-th vertex removed for $i=1,\dots,s+t$. Then we may assume that $T=\{v_1,\ldots,v_t\}$ and $S=\{v_{t+1},\ldots,v_{t+s}\}$.
Let $F_1=G[X]$ and $F_i=F_{i-1}-v_{i-1}$ for $i=2,\dots, s+t$. Then $\delta_{F_i}(v_i)\le k-2$ and
\[
e(S)+e(S,X^c)=\sum_{i=t+1}^{t+s}\delta_{F_i}(v_i)\le (k-2)s.
\]
So
\begin{align*}
\sum_{v\in S\cup X^c}\delta_{X}(v)&=2e(S)+e(S,T\cup X^c)+2e(X^c)+e(X^c,T\cup S)\\
&\le e(S)+e(S,X^c)+e(X^c)+e(X)\\
&\le (k-2)s+e(X^c)+e(X).\qedhere
\end{align*}
\end{proof}

By Claim \ref{FK}, we have
\begin{align*}
\eta(X)&=\sum_{v\in T}(\delta_X(v)-k+1)\frac{x_v}{x_u}+\sum_{v\in S\cup X^c}(\delta_X(v)-k+1)\frac{x_v}{x_u}-e(X)\\
&\le-\sum_{v\in T}\frac{x_v}{x_u}+\sum_{v\in S\cup X^c}\delta_{X}(v)-(k-1)(s+|X^c|)-e(X)\\
&\le -\sum_{v\in T}\frac{x_v}{x_u}+(k-2)s+e(X^c)-(k-1)(s+|X^c|)\\
&=-\sum_{v\in T}\frac{x_v}{x_u}-s+e(X^c)-(k-1)|X^c|,
\end{align*}
from which, together with \eqref{e7}, we have
\begin{equation}\label{efk}
\sum_{v\in T}\frac{x_v}{x_u}\le \frac{k(k-1)}{2}-s+e(X^c)-(k-1)|X^c|.
\end{equation}

\begin{claim}\label{C6}
For any $H\in \mathcal{C}$, if $H^c$ is non-trivial, then $\eta(H)\le \eta(H^c)\le-\frac{k(k-1)}{2}$.
\end{claim}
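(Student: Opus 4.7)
By Claim \ref{N2} we already have $\eta(H) \le \eta(H^c)$, so the task reduces to bounding $\eta(H^c)$. Set $h = |V(H^c)|$; since $H^c$ has minimum degree at least $k-1$ by construction of the $(k-1)$-core, one has $h \ge k$ and $|E(H^c)| \ge \tfrac{(k-1)h}{2}$. The plan is to split on the size $h$ of the core.

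If $h > \tfrac{5}{2}(k-1)$, then Claim \ref{C1} combined with Claim \ref{C5} (the latter eliminates the middle branch of Claim \ref{C1}, since $H^c$ is $C_{2k-1}$-free) yields $\eta(H^c) \le -\tfrac{k(k-1)}{2}$ directly. Otherwise $k \le h \le \tfrac{5}{2}(k-1)$, and I would introduce the parameter $c = \max_{z\in V(H^c)} x_z/x_u$. Using $\delta_{H^c}(z)\ge k-1$, one has
\[
\eta(H^c) \le c\sum_{z\in V(H^c)}(\delta_{H^c}(z)-k+1) - |E(H^c)| = (2c-1)|E(H^c)| - c(k-1)h.
\]
When $c \le \tfrac{1}{2}$ the coefficient $(2c-1)$ is non-positive, so inserting the minimum-degree bound $|E(H^c)| \ge \tfrac{(k-1)h}{2}$ gives $\eta(H^c) \le -\tfrac{(k-1)h}{2} \le -\tfrac{k(k-1)}{2}$, as required.

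When $c > \tfrac{1}{2}$, I would derive a contradiction. Pick $v_0 \in V(H^c)$ with $x_{v_0} > x_u/2$; from the Perron equation at $v_0$, combined with the lower bound $\rho \ge \tfrac{7}{4}k^2 + 2k$ from \eqref{rho} and the fact that $\delta_{H^c}(v_0)$ is not too large (because $h$ is bounded), $v_0$ is forced to have many $Y$-neighbours, most of them concentrated in $Y_2$ after the $Y_1, Y_2$ splitting used throughout the paper. Mimicking the template of Claim \ref{C5}, I would then argue that $N_X(y) = \{v_0\}$ for every $y \in N_{Y_2}(v_0)$: if some $y$ had a second neighbour $v' \in X$, then combining $y$, $v'$, the $\ge k-1$ common neighbours of $u$ and $v_0$ inside $H^c$, and a suitable short detour through another heavy $Y$-vertex of $v_0$ would assemble a $(2k-3)$-chorded $(2k+1)$-cycle in $G$. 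Iterating the resulting constraint over the many heavy $Y$-neighbours of $v_0$ and using Perron equations at those $y$'s would force $e(Y) > \tfrac{k(k-1)}{2}$, in contradiction with \eqref{e6}.

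The main obstacle is the structural construction in this last case: one needs to exploit simultaneously the minimum-degree condition inside $H^c$, the absence of $C_{2k-1}$ and $P_{2k}$ in $H^c$, and the Perron-weight rigidity that a heavy vertex $v_0 \in V(H^c)$ forces on its neighbourhood, in order to exhibit the forbidden $(2k-3)$-chorded $(2k+1)$-cycle explicitly. Once this cycle-hunting step is carried out for $k \le h \le \tfrac{5}{2}(k-1)$, the proof of Claim \ref{C6} is complete.
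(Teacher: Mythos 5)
Your reduction via Claim \ref{N2} and your treatment of the first two regimes are correct: for $h>\frac{5}{2}(k-1)$ the bound follows from Claims \ref{C1} and \ref{C5}, and for $k\le h\le\frac{5}{2}(k-1)$ with $c=\max_{z\in V(H^c)}x_z/x_u\le\frac12$ your computation $\eta(H^c)\le(2c-1)|E(H^c)|-c(k-1)h\le-\frac{(k-1)h}{2}\le-\frac{k(k-1)}{2}$ is valid and is a nice purely local argument that the paper does not use. The problem is the remaining case (small core containing a heavy vertex), which you leave as a sketch and yourself call ``the main obstacle.'' This case is not vacuous and cannot be settled component-by-component: for $k=3$ and $H^c\cong K_4$ one has $\eta(H^c)\le 4-6=-2>-3=-\frac{k(k-1)}{2}$, so the asserted inequality can genuinely fail for an isolated component and must be ruled out using the extremality of $G$. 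Your proposed route --- forcing $N_X(y)=\{v_0\}$ for the heavy $Y$-neighbours of $v_0$ and inflating $e(Y)$ as in Claim \ref{C5} --- leans on the specific structure exploited there, namely $|V(H)|=2k-1$ with a spanning $(2k-1)$-cycle, which is exactly what produces the forbidden $(2k-3)$-chorded $(2k+1)$-cycle once a second $X$-neighbour appears. For a general core with $k\le h\le\frac{5}{2}(k-1)$ (possibly with $h<2k-1$) no such spanning cycle exists, you exhibit no construction, and it is not clear one exists; as written this is a gap, not a proof.

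The paper closes this case by a global counting argument that avoids cycle-hunting altogether. Supposing some component has a nontrivial core with $\eta(H^c)>-\frac{k(k-1)}{2}$, Claim \ref{C1} combined with \eqref{e7} forces every core to have at most $\frac{5}{2}(k-1)$ vertices and limits the number of components with nontrivial cores to $\frac{k}{2}$, so $|X^c|\le\frac{5}{4}k(k-1)$; since every cycle of $X^c$ has length at most $2k-2$, Lemma \ref{C} gives $e(X^c)\le(k-1)|X^c|$, and substituting into \eqref{efk} and the eigenvalue equation at $u$ yields $\rho\le\frac{7}{4}k(k-1)$, contradicting \eqref{rho}. To repair your outline, replace the structural construction in the last case by this counting argument (or supply a construction that works for every admissible small core, not only those containing a spanning $(2k-1)$-cycle).
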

\begin{proof}
Suppose that there is a component $H_0\in \mathcal{C}$ such that $H_0^c$ is nontrivial and $\eta(H_0^c)>-\frac{k(k-1)}{2}$. Then by Claim \ref{C1}, $|V(H_0^c)|\le \frac{5}{2}(k-1)$ and $\eta(H^c)\le -k+1$.

From  \eqref{e7}, we have
\[
-\frac{k(k-1)}{2}\le \eta(X)=\sum_{H\in \mathcal{C}}\eta(H)\le \sum_{H\in \mathcal{C}}\eta(H^c).
\]
By Claim \ref{C1}, there is no component $H\in \mathcal{C}$ with $|V(H^c)|> \frac{5}{2}(k-1)$.
By Claim \ref{C5}, $X^c$ is $C_{2k-1}$-free and there are at most $\frac{k}{2}$ components $H\in \mathcal{C}$ with non-trivial $(k-1)$-cores $H^c$.
So $|X^c|\le \frac{k}{2}\cdot\frac{5}{2}(k-1)=\frac{5}{4}k(k-1)$.

As $G$ does not contain a $(2k-3)$-chorded $(2k+1)$-cycle, $G[X]^c$ is $C_\ell$-free for each $\ell\ge 2k$.
It then follows that every cycle in $G[X]^c$ has length at most $2k-2$ and so we have by Lemma \ref{C} that $e(X^c)\le (k-1)|X^c|$.
Then by \eqref{efk}, $\sum_{v\in T}x_v\le \left(\frac{k(k-1)}{2}-s\right)x_u$.

From $\rho\mathbf{x}=A(G)\mathbf{x}$ at $u$, we have
\[
\rho x_u=\sum_{v\in X}x_v=\sum_{v\in T}x_v+\sum_{v\in S}x_v+\sum_{v\in X^c}x_v\le \left(\frac{k(k-1)}{2}+\frac{5k(k-1)}{4}\right)x_u=\frac{7k(k-1)}{4}x_u.
\]
So $\rho\le\frac{7}{4}k(k-1)$, contradicting \eqref{rho}.
\end{proof}

\begin{claim}\label{C7}
$\eta(X)=-\frac{k(k-1)}{2}$ and
$G[X]\cong K_{k-1}\vee (\delta_G(u)-k+1)K_1$.
\end{claim}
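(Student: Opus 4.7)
My plan is to proceed in three stages. First, I would show that at least one component of $G[X]$ has a non-trivial $(k-1)$-core. Indeed, if every $H\in\mathcal{C}$ had trivial core, then $X^c=\emptyset$ and \eqref{efk} simplifies to $\sum_{v\in T}x_v/x_u\le\frac{k(k-1)}{2}-s$; since also $\sum_{v\in S}x_v\le s\cdot x_u$, the identity $\rho x_u=\sum_{v\in X}x_v$ would give $\rho\le\frac{k(k-1)}{2}$, contradicting \eqref{rho}.

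Second, let $\mathcal{C}_0\subseteq\mathcal{C}$ be the subset of components with non-trivial core. By Claim \ref{C6}, $\eta(H)\le-\frac{k(k-1)}{2}$ for every $H\in\mathcal{C}_0$. For $H\in\mathcal{C}\setminus\mathcal{C}_0$, I would iterate the strict inequality $\eta(H)<\eta(H-v)$ from the proof of Claim \ref{N2} (valid whenever $\delta_H(v)\le k-2$) down to the trivial core, using $\eta(\emptyset)=0$ and $\eta(K_1)=-(k-1)x_v/x_u<0$, to conclude $\eta(H)<0$. If $|\mathcal{C}_0|\ge 2$ then $\eta(X)\le-k(k-1)<-\frac{k(k-1)}{2}$, while if $\mathcal{C}\setminus\mathcal{C}_0\ne\emptyset$ then the strict negatives push $\eta(X)<-\frac{k(k-1)}{2}$; either would contradict \eqref{e7}. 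Hence $G[X]$ is connected; writing $H_0$ for its unique component, the chain $\eta(H_0)=\eta(X)=-\frac{k(k-1)}{2}\le\eta(H_0^c)\le-\frac{k(k-1)}{2}$ together with the strict-decrease step in Claim \ref{N2} forces $H_0=H_0^c$, i.e.\ $\delta_X(v)\ge k-1$ for every $v\in X$, and $\eta(H_0^c)=-\frac{k(k-1)}{2}$.

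Finally, let $h=|V(H_0^c)|=|X|=\delta_G(u)$. Were $h\le\frac{5}{2}(k-1)$, then by Lemma \ref{C} applied to $X^c$ (whose cycles all have length at most $2k-2$, by Claim \ref{C5} together with the observation that a cycle of length at least $2k$ in $G[X]$ combined with $u$ produces a $(2k-3)$-chorded $(2k+1)$-cycle in $G$) we would have $e(X^c)\le(k-1)|X^c|$. Plugging into \eqref{efk} yields
\[
\rho\, x_u=\sum_{v\in X}x_v\le\left(\tfrac{k(k-1)}{2}+|X^c|\right)x_u\le\tfrac{(k-1)(k+5)}{2}\,x_u,
\]
which contradicts \eqref{rho}. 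Hence $h>\frac{5}{2}(k-1)$, and the ``moreover'' clause of Claim \ref{C1} applied to $H_0^c$ yields $G[X]=H_0^c\cong K_{k-1}\vee(\delta_G(u)-k+1)K_1$. I expect the main obstacle to be the bookkeeping in the second stage: carefully handling trivial-core components (including isolated vertices in $G[X]$) so that every slack in the sum over $\mathcal{C}$ is accounted for, thereby simultaneously forcing $|\mathcal{C}_0|=1$, $\mathcal{C}\setminus\mathcal{C}_0=\emptyset$, and $H_0=H_0^c$.
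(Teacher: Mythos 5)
Your proposal is correct and follows essentially the same route as the paper: rule out the all-trivial-core case via \eqref{efk} and \eqref{rho}, use Claims \ref{N2} and \ref{C6} to force a unique component with $G[X]=H_0=H_0^c$ and $\eta(X)=-\frac{k(k-1)}{2}$, then exclude $|X|\le\frac{5}{2}(k-1)$ by a spectral bound and invoke the ``moreover'' clause of Claim \ref{C1}. The only (harmless) deviations are cosmetic: in the last stage the paper uses the simpler bound $\rho x_u=\sum_{v\in X}x_v\le |X|x_u\le\frac{5}{2}(k-1)x_u$ directly, since $T=S=\emptyset$ at that point, rather than re-deriving a bound through \eqref{efk} and Lemma \ref{C}.
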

\begin{proof}
Suppose that for each $H\in \mathcal{C}$, $H^c$ is trivial. Then  $e(X^c)=0$.
From \eqref{efk}, we have
\[
\sum_{v\in T}x_v\le\left( \frac{k(k-1)}{2}-s-(k-1)|X^c|\right)x_u\le \left( \frac{k(k-1)}{2}-s-|X^c|\right)x_u.
\]
Then from $\rho\mathbf{x}=A(G)\mathbf{x}$ at $u$,
\[
\rho x_u=\sum_{v\in T}x_v+\sum_{v\in S}x_v+\sum_{v\in X^c}x_v\le \frac{k(k-1)}{2}x_u,
\]
so $\rho\le\frac{k(k-1)}{2}$, contradicting \eqref{rho}.
Thus there is one component $H_0\in \mathcal{C}$ such that $H_0^c$ is non-trivial.
By Claim \ref{C6}, $\eta(H_0)\le -\frac{k(k-1)}{2}$.

Suppose that there is another component $H_1\in \mathcal{C}$. If $H_1^c$ is trivial, then by Claim \ref{N2}, $\eta(H_1)\le \eta(H_1^c)<0$. If $H_1^c$ is non-trivial, then by Claims \ref{N2}
and \ref{C6}, $\eta(H_1)\le \eta(H_1^c)<0$.
So $\eta(X)\le \eta(H_0)+\eta(H_1)<-\frac{k(k-1)}{2}$, contradicting \eqref{e7}.
So there is exactly one component $H_0$ in $\mathcal{C}$, i.e., $G[X]=H_0$.

From \eqref{e7}, we have $\eta(X)=\eta(H_0)=-\frac{k(k-1)}{2}$, so $X=X^c$ by Claim \ref{N2}.
If $|X|\le \frac{5}{2}(k-1)$, then, from $\rho\mathbf{x}=A(G)\mathbf{x}$ at $u$, we have
\[
\rho x_u=\sum_{v\in X}x_v\le \frac{5}{2}(k-1)x_u,
\]
so $\rho\le \frac{5}{2}(k-1)$, contradicting \eqref{rho}. This shows that $|X|>\frac{5}{2}(k-1)$. By Claim \ref{C1}, $G[X]\cong K_{k-1}\vee (\delta_G(u)-k+1)K_1$.
\end{proof}

By Claim \ref{C7}, $\eta(X)=-\frac{k(k-1)}{2}$.  So, from \eqref{e6}, we have $e(Y)=0$, which  implies that \eqref{e6} is an equality.
As $G$ is isolated-free, $\delta_X(y)\ge 1$ and $x_y=x_u$ for each $y\in Y$.

Let $V_1\subset X$ be the set of vertices of degree $\delta_G(u)-1$ in $G[X]$ and $V_2=X\setminus V_1$.
Note that
\begin{align*}
-\frac{k(k-1)}{2}=\eta(X)&=\sum_{v\in X}(\delta_{X}(v)-k+1)\frac{x_v}{x_u}-e(X)\\
&=\sum_{v\in V_1}(\delta_G(u)-k)\frac{x_v}{x_u}-\frac{k(k-1)}{2}-(\delta_G(u)-k)(k-1).
\end{align*}
Then $x_v=x_u$ for each $v\in V_1$.
From $\rho\mathbf{x}=A(G)\mathbf{x}$ at $u$ and any $v\in V_1$, we have
\[
\sum_{z\in X}x_z=\rho x_u=\rho x_v=\sum_{z\in X}x_z+\sum_{z\in N_Y(v)}x_z,
\]
so $\sum_{z\in N_Y(v)}x_z=0$. This shows that $N_Y(v)=\emptyset$ for each $v\in V_1$.

Suppose that $Y\ne\emptyset$, say $y_0\in Y$.
As $e(Y)=0$, $N_G(y_0)\subseteq V_2$.
From $\rho\mathbf{x}=A(G)\mathbf{x}$ at $y_0$, we have
\[
\rho x_{y_0}=\sum_{v\in N_G(y_0)}x_v<\sum_{v\in X}x_v=\rho x_u,
\]
showing that $x_{y_0}<x_u$, which contradicts the fact that $x_y=x_u$ for each $y\in Y$.
So $Y=\emptyset$. Thus $G\cong K_k\vee (\delta_G(u)-k+1)K_1$.

As $G$ has $m$ edges, $\delta_G(u)=\frac{m}{k}+\frac{k-1}{2}$. Therefore, $G\cong K_k\vee\left(\frac{m}{k}-\frac{k-1}{2}\right)K_1$.

Let $U_1$ be the set of vertices in $V(K_k)$ and $U_2=V(G)\setminus U_1$. Corresponding to the partition $V(G)=U_1\cup U_2$, $A(G)$ has an equitable quotient matrix
\[
B=\begin{pmatrix}
k-1 &  \frac{m}{2}-\frac{k-1}{2}\\
k & 0
\end{pmatrix}.
\]
By Lemma \ref{quo} and direct calculation, we have $\rho(G)=\frac{k-1+\sqrt{4m-k^2+1}}{2}$. This proves Theorem  \ref{Cai2+}.
\end{proof}

\section{Concluding remarks}

There are various conditions concerning the existence of chorded cycles. These conditions include
minimum degree condition, size condition,  spectral condition, etc.


Given a graph $G$ of order $n$ and size $m$, Posa \cite{Po} showed  that if $m\ge 2n-3\ge 5$ then $G$ contains a chorded cycle.

\begin{proposition} \label{RE}
Let $G$ be a graph of order $n$ and size $m$ that does not contain a $2$-chorded cycle. Then $m\le 2n-3$ and the bound is sharp.
\end{proposition}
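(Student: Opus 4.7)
The plan is a short induction on $n$, bootstrapped by the strengthening of Czipzer's theorem noted in the introduction: any graph with minimum degree at least $3$ contains a $2$-chorded cycle. Consequently, any graph $G$ that avoids a $2$-chorded cycle admits a vertex $v$ with $\delta_G(v) \le 2$.

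For the base case $n = 3$, one has $m \le 3 = 2n-3$ trivially, with equality only for $K_3$, which has no $2$-chorded cycle. For the inductive step with $n \ge 4$, let $v$ be a vertex of $G$ with $\delta_G(v) \le 2$ and apply the induction hypothesis to $G - v$, which still avoids a $2$-chorded cycle as a subgraph of $G$. This gives $|E(G-v)| \le 2(n-1) - 3 = 2n - 5$, so $m = |E(G-v)| + \delta_G(v) \le 2n-5 + 2 = 2n - 3$.

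For sharpness I would exhibit $G^* = K_2 \vee (n-2)K_1$ for $n \ge 3$. Writing the two vertices of $K_2$ as $a, b$ and the $n-2$ independent vertices as $v_1, \dots, v_{n-2}$, the graph $G^*$ has exactly $1 + 2(n-2) = 2n-3$ edges. Since each $v_i$ has its only neighbors in $\{a,b\}$, every cycle of $G^*$ is either a triangle $a v_i b$ (with no chord) or a $4$-cycle $a v_i b v_j$ with $i \ne j$ whose only chord is $ab$ (as $v_i v_j \notin E(G^*)$). Hence $G^*$ contains no $2$-chorded cycle, confirming that the bound $m \le 2n-3$ is sharp.

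The only substantive input is the strengthened Czipzer theorem that min degree $\ge 3$ forces a $2$-chorded cycle; once that is invoked, the inductive pruning and the verification of the extremal family are routine.
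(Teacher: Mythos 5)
Your proof is correct and is essentially identical to the paper's: the same induction on $n$ with base case $n=3$, the same use of the (strengthened Czipzer) fact that avoiding a $2$-chorded cycle forces a vertex of degree at most $2$, and the same extremal graph, since $K_2\vee(n-2)K_1$ is exactly the $K_{1,1,n-2}$ the paper exhibits. Your verification of the extremal example is just slightly more explicit than the paper's.
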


\begin{proof}
We prove by induction on $n$. If $n=3$, then $m\le 3$, as desired. Suppose that $n\ge 4$ and the result is true for any graph of order less than $n$. As $G$ does not contain a $2$-chorded cycle, $\delta(G)\le 2$. Let $v$ be the vertex in $G$ of minimum degree. By induction, $|E(G-v)|\le 2(n-1)-3$. So $m=|E(G-v)|+\delta_G(v)\le 2(n-1)-3+2=2n-3$, as desired.
Note that $K_{1,1,n-2}$ is an $n$-vertex graph containing no $2$-chorded cycle which has $2n-3$ edges. So the bound is sharp.
\end{proof}

Let $G$ be a graph of order $n$ and size $m$. By Proposition \ref{RE},
if  $m=2n-3$, then $G$ does not necessarily contain a $2$-chorded cycle, while
if $m\ge 2n-2\ge 6$, then $G$
contains a $2$-chorded cycle.

The graphs considered in this paper have fixed size but the order is not fixed.
It would be interesting to investigate the connections among these conditions that imply the existence of a ($2$-)chorded cycle.

\bigskip
\bigskip

\noindent {\bf Acknowledgement.}
This work was supported by the National Natural Science Foundation of China (No.~12071158).


\begin{thebibliography}{99}

%

\bibitem{BGLS} P.N. Balister, E. Gy\"{o}ri, J. Lehel, R.H. Schelp,
Connected graphs without long paths,
Discrete Math. 308 (2008) 4487--4494.

\bibitem{BLS} P. Balister, H. Li, R. Schelp,
Decompositions of graphs into cycles with chords,
J. Comb. Theory Ser. B 128 (2018) 47--65.

\bibitem{BFG}
A. Bialostocki, D.  Finkel, A.  Gy\'{a}rf\'{a}s,
Disjoint chorded cycles in graphs,
Discrete Math. 308 (23) (2008)  5886--5890.

\bibitem{Bo} J.A. Bondy,
Variations on the Hamiltonian theme,
Canad. Math. Bull. 15 (1972) 57--62.

\bibitem{BHa} A. Brouwer, W. Haemers,
Spectra of Graphs, Springer, New York, 2012.

\bibitem{BH} R.A. Brualdi, A,J. Hoffman,
On the spectral radius of $(0,1)$-matrices,
Linear Algebra Appl. 65 (1985) 133--146.


\bibitem{CDT1}
S. Cioab\v{a}, D.N. Desai, M. Tait, The spectral radius of graphs with no odd wheels,
Eur. J. Comb. 99 (2022) 103420.



\bibitem{CDT2}
S. Cioab\v{a}, D.N. Desai, M. Tait, The spectral even cycle problem, Comb. Theory 4 (1) (2024) 10.

\bibitem{CFG}
M. Cream, R.J. Faudree,
R.J. Gould, K. Hirohata, Chorded cycles,
Graphs Comb. 32 (2016) 2295--2313.

\bibitem{Cz}
J. Czipser, Solution to problem 127 (in Hungarian), Mat. Lapok 14 (1963) 373--374.


\bibitem{EG}
P. Erd\H{o}s, T. Gallai,
On maximal paths and circuits of graphs,
Acta Math. Acad. Sci. Hungar. 10 (1959) 337--356

\bibitem{GLW}
Y. Gao, X. Lin, H. Wang,
Vertex-disjoint double chorded cycles in bipartite graphs,
Discrete Math. 342 (2019) 2482--2492.




\bibitem{Gou} R.J. Gould,
Results and problems on chorded cycles: a survey,
Graphs Comb. 38 (2022) 189, 27 pp.







\bibitem{GHH} R.J. Gould, K. Hirohata, P. Horn,
On independent doubly chorded cycles,
Discrete Math. 338 (2015) 2051--2071.


\bibitem{LZS}
X. Li, M. Zhai, J. Shu,
A Brualdi-Hoffman-Tur\'{a}n problem on cycles,
Eur. J. Comb. 120 (2024) 103966.

\bibitem{LW}
Y. Liu, L. Wang, Spectral radius of graphs of given size with forbidden subgraphs,
Linear Algebra Appl. 689 (2024) 108--125.



\bibitem{Lov} L. Lovasz,
 Combinatorial Problems and Exercises, North-Holland, 1979.

 \bibitem{LLL}
J. Lu, L. Lu, Y. Li, Spectral radius of graphs forbidden $C_7$  or $C_6^\Delta$,
Discrete Math. 347 (2024) 113781.

\bibitem{MLH}
G. Min, Z. Lou, Q. Huang,
A sharp upper bound on the spectral radius of $C_5$-free/$C_6$-free graphs with given size,
Linear Algebra Appl. 640 (2022) 162--178.

\bibitem{Nik}
V. Nikiforov, Walks and the spectral radius of graphs, Linear Algebra Appl. 418 (2006) 257--268.

\bibitem{Nik1}
V. Nikiforov,
The maximum spectral radius of $C_4$-free graphs of given order and size,
Linear Algebra Appl. 430 (2009) 2898--2905.

\bibitem{NR}
V. Nikiforov, O. Rojo, On the $\alpha$-index of graphs with pendent paths, Linear Algebra Appl. 550 (2018) 87--104.

\bibitem{No} E. Nosal, Eigenvalues of Graphs, Master's Thesis, University of Calgary, 1970.

\bibitem{Ore}  O. Ore,
Arc coverings of graphs,
Ann. Mat. Pura Appl.  55 (1961) 315--321.

\bibitem{Po}
L. Posa, Problem no. 127 (in Hungarian), Mat. Lapok 12 (1961) 254.


 \bibitem{QZ}
S. Qiao, S. Zhang,
Vertex-disjoint chorded cycles in a graph,
Oper. Res. Lett. 38 (6) (2010) 564--566.



\bibitem{Row1} P. Rowlinson,
On the maximal index of graphs with a prescribed number of edges,
Linear Algebra Appl. 110 (1988) 43--53.

\bibitem{Row}
P. Rowlinson, More on graph perturbations, Bull. London Math. Soc. 22 (1990) 209--216.

\bibitem{SV}
M. Santana, M. Van Bonn,
Sharp minimum degree conditions for disjoint doubly chorded cycles,
J. Comb. 15 (2) (2024) 217--282.

\bibitem{SLB}
S.K. Simi\'c, E.M. Li Marzi, F. Belardo,
On the index of caterpillars, Discrete Math. 308 (2008) 324--330.







\bibitem{W97} H. Wang, Vertex-disjoint hexagons with chords in a bipartite graph,
Discrete Math. 187 (1997) 221--231.

\bibitem{Wan} Z. Wang,
Generalizing theorems of Nosal and Nikiforov: triangles and quadrilaterals,
Discrete Math. 345 (2022) 112973.

\bibitem{XZ} L. Xu, B. Zhou,
Answers to Gould's question concerning the existence of chorded cycles,
ArXiv: 2312.1679.

\bibitem{YLP}
L. Yu, Y. Li, Y. Peng, Spectral extremal graphs for fan graphs, ArXiv: 2404.03423.


\bibitem{ZHW} J. Zheng, X. Huang, J. Wang,
Spectral radius for the existence of a chorded cycle, ArXiv: 2311.13323v2.


\bibitem{ZLS}
M.  Zhai, H. Lin, J. Shu,
Spectral extrema of graphs with fixed size: cycles and complete bipartite graphs,
Eur. J. Comb. 95 (2021)  103322.


\bibitem{ZW}
Y. Zhang,  L. Wang,
On the spectral radius of graphs without a gem, Discrete Math. 347(2024) 114171.

\bibitem{Z}
B. Zhou, Upper bounds for the Zagreb indices and the spectral radius of series-parallel graphs, Int. J. Quantum Chem. 107 (2007) 875--878.



\end{thebibliography}
\end{document}